\documentclass[fleqn,reqno,11pt,a4paper,final]{amsart}

\usepackage[a4paper,left=30mm,right=30mm,top=30mm,bottom=30mm,marginpar=20mm]{geometry}
\usepackage{amsmath}
\usepackage{amssymb}
\usepackage{amsthm}
\usepackage{amscd}
\usepackage[ansinew]{inputenc}
\usepackage{cite}
\usepackage{bbm}
\usepackage{color}
\usepackage[english=american]{csquotes}
\usepackage[final]{graphicx}
\usepackage{hyperref}
\usepackage{calc}
\usepackage{mathptmx}
\usepackage{t1enc}

\linespread{1.1}
%\linespread{1.6}

\graphicspath{{../Pictures/}}

\numberwithin{equation}{section}

\newtheoremstyle{thmlemcorr}{10pt}{10pt}{\itshape}{}{\bfseries}{.}{10pt}{{\thmname{#1}\thmnumber{ #2}\thmnote{ (#3)}}}
\newtheoremstyle{thmlemcorr*}{10pt}{10pt}{\itshape}{}{\bfseries}{.}\newline{{\thmname{#1}\thmnumber{ #2}\thmnote{ (#3)}}}
\newtheoremstyle{remexample}{10pt}{10pt}{}{}{\bfseries}{.}{10pt}{{\thmname{#1}\thmnumber{ #2}\thmnote{ (#3)}}}
\newtheoremstyle{ass}{10pt}{10pt}{}{}{\bfseries}{.}{10pt}{{\thmname{#1}\thmnumber{ A#2}\thmnote{ (#3)}}}

\theoremstyle{thmlemcorr}
\newtheorem{theorem}{Theorem}
\numberwithin{theorem}{section}
\newtheorem{lemma}[theorem]{Lemma}
\newtheorem{corollary}[theorem]{Corollary}

\theoremstyle{thmlemcorr*}
\newtheorem{theorem*}{Theorem}
\newtheorem{lemma*}[theorem]{Lemma}
\newtheorem{corollary*}[theorem]{Corollary}
\newtheorem{proposition*}[theorem]{Proposition}
\newtheorem{problem*}[theorem]{Problem}
\newtheorem{conjecture*}[theorem]{Conjecture}
\newtheorem{definition*}[theorem]{Definition}

\theoremstyle{remexample}
\newtheorem{remark}[theorem]{Remark}

\theoremstyle{ass}

%\numberwithin{assumption}{section}

\def\Xint#1{\mathchoice
{\XXint\displaystyle\textstyle{#1}}
{\XXint\textstyle\scriptstyle{#1}}
{\XXint\scriptstyle\scriptscriptstyle{#1}}
{\XXint\scriptscriptstyle\scriptscriptstyle{#1}}
\!\int}
\def\XXint#1#2#3{{\setbox0=\hbox{$#1{#2#3}{\int}$ }
\vcenter{\hbox{$#2#3$ }}\kern-.56\wd0}}
\def\intavg{\Xint-}

\DeclareMathOperator{\cof}{cof}

\newcommand{\Ocal}{\mathcal{O}}

\newcommand{\Fbb}{\mathbb{F}}

\newcommand{\Ibb}{\mathbb{I}}

\newcommand{\Mbb}{\mathbb{M}}

\newcommand{\Rbb}{\mathbb{R}}

\newcommand{\Tbb}{\mathbb{T}}
\newcommand{\T}{\mathbb{T}}

\DeclareMathOperator{\diverg}{div}
\DeclareMathOperator{\Div}{div}

\newcommand{\del}{\partial}

\newcommand{\norm}[1]{\|#1\|}

\newcommand{\R}{\mathbb{R}}

\def \la {\langle}
\def \ra {\rangle}

\newcommand{\vr}{\varrho}

\newcommand{\dx}{ dx}

% aus symbol-a4.pdf
\def\Xint#1{\mathchoice
{\XXint\displaystyle\textstyle{#1}}%
{\XXint\textstyle\scriptstyle{#1}}%
{\XXint\scriptstyle\scriptscriptstyle{#1}}%
{\XXint\scriptscriptstyle\scriptscriptstyle{#1}}%
\!\int}
\def\XXint#1#2#3{{\setbox0=\hbox{$#1{#2#3}{\int}$}
\vcenter{\hbox{$#2#3$}}\kern-.5\wd0}}

% own invention...

\renewcommand{\epsilon}{\varepsilon}
\renewcommand{\phi}{\varphi}

\begin{document}

%% TITLE MATTERS

\title[Onsager's Conjecture for Conservation of Energy and Entropy]{
Onsager's conjecture in bounded domains for the conservation of  entropy and other companion laws
}
%Conservation of Energy and Entropy in Bounded Domains}

\author{C. Bardos}
\address{\textit{Claude Bardos:  }Laboratoire J.-L. Lions, BP187, 75252 Paris Cedex 05, France. Email:
}
\email{claude.bardos@gmail.com}

\author{P. Gwiazda}
\address{\textit{Piotr Gwiazda:} Institute of Mathematics, Polish Academy of Sciences, \'Sniadeckich 8, 00-656 Warszawa, Poland}
\email{pgwiazda@mimuw.edu.pl}

\author{A. \'Swierczewska-Gwiazda}
\address{\textit{Agnieszka \'Swierczewska-Gwiazda:}  Institute of Applied Mathematics and Mechanics, University of Warsaw, Banacha 2, 02-097 Warszawa, Poland}
\email{aswiercz@mimuw.edu.pl}

\author{E.S. Titi}
\address{\textit{Edriss S. Titi:} Department of Mathematics,
                 Texas A\&M University, 3368 TAMU,
                 College Station, TX 77843-3368, USA. Department of Applied Mathematics and Theoretical Physics, University of Cambridge,
Wilberforce Road, Cambridge CB3 0WA, UK. Department of Computer Science and Applied Mathematics, The Weizmann Institute of Science, Rehovot 76100, Israel.}
                 \email{titi@math.tamu.edu \, and \, edriss.titi@weizmann.ac.il}

\author{E. Wiedemann}

\address{\textit{Emil Wiedemann:} Institute of Applied Analysis, Universit\"at Ulm, Helmholtzstr.\ 18, 89081 Ulm, Germany}
\email{emil.wiedemann@uni-ulm.de}

\begin{abstract}
 We show that weak solutions of  general conservation laws in bounded domains conserve their generalized entropy, and other respective companion laws, if they possess a certain fractional differentiability of order 1/3 in the interior of the domain, and if the normal component of the corresponding  fluxes tend to zero as one  approaches the boundary. This extends various recent results of the authors.
  %of Gwiazda, Mich\'alek, and \'Swierczewska-Gwiazda to the case of a bounded domain.
  %Notice that there are two kinds of generalizations: Extension of Euler in bounded domains to Conservation laws in Bounded domains. Extension of Conservation Laws without boundaries to Conservation Laws with boundaries.
\end{abstract}

%\vspace{4pt}

%\noindent\textsc{MSC (2010): XXXXX (primary);}

%\noindent\textsc{Keywords:}

%\vspace{4pt}

\noindent\textsc{Date:} February 12, 2019%\today{}.
%\end{abstract}

%% PDF MATTERS

%% START OF CONTENT

\maketitle

{\bf Keywords:} {Onsager's conjecture, conservation laws, conservation of entropy.} \\
 {\bf MSC Subject Classifications:} {35L65 (primary), 35D30, 35Q35 (secondary).}

%\hrule\vspace{1pt}
%\begin{center}
%\textbf{\large
%DRAFT (version of \today)}
%\end{center}
%\hrule
%\vspace{10mm}

%\setcounter{tocdepth}{1}
 \tableofcontents

%\newpage

\section{Introduction}

 %\tcb{Major modifications are written in blue.} \textcolor {red}{ And questions to settle are in red. In particular the title and the abstract should be revised taking in account what has already been published and what is the new material contained in the present publication}

We consider very general systems of conservation laws of the form
\begin{equation}\label{conslawintro}
\diverg_X(G(U(X)))=0\quad\text{for $X\in\mathcal{X}$},
\end{equation}
 where $\mathcal{X}\subset \R^{k+1}$ is open, $U:\mathcal{X}\to\mathcal O$ for some subset $\mathcal{O}\subset \R^n$, and $G:\mathcal{O}\to\R^{n\times(k+1)}$. It is shown in Section~\ref{applications} below that many important evolution equations of hyperbolic character can be written in this {general}  form, including the incompressible and compressible Euler systems, the equations of inviscid magnetohydrodynamics, and the equations of elastodynamics.

{Many systems of the form  (\ref{conslawintro}) come} with so-called \emph{companion laws} (see~\eqref{complaw} below), according to which sufficiently regular solutions satisfy one or several (sometimes infinitely many) additional conservation laws. Oftentimes, these companion laws can be interpreted as the conservation of energy or entropy. In particular, in the context of hyperbolic conservation laws, the notion of {\emph{ generalized entropy solution}} refers to these additional formally conserved quantities.

Of course a physical entropy can not, in general, be viewed as a conserved quantity; quite the opposite, it is (with the mathematical sign convention) typically \emph{dissipated}, i.e.\ it decreases in time.
%But even in incompressible fluid dynamics, the kinetic energy has long been expected to be anomalously dissipated in turbulent flows, and this possibility has been rigorously confirmed .

In all examples of dissipation of energy or entropy, a certain degree of irregularity is required to violate the corresponding companion law. This is true for Scheffer's solutions and subsequent refinements in the case of the incompressible Euler equations, but also for hyperbolic conservation laws, where the mechanism of entropy dissipation by shock formation is classically known. Mathematically, the formal conservation of energy/entropy relies on the chain rule, which may not be valid for non-Lipschitz functions.

The question thus arises what is the threshold regularity of the solutions above which companion laws are guaranteed to hold.
 {In 1949 Onsager \cite{ON} related this issue to the Kolmogorov statistical theory of turbulence and proposed (what then became known as the Onsager conjecture) that in $3d$ for the solutions of the incompressible Euler equations this threshold should be H\"older regularity with critical exponent $\alpha=\frac1 3 $. These recent years have seen definite progress toward the resolution of this conjecture.}

 { On the one hand after the forerunner contributions of Scheffer \cite{Scheffer} (1993) and Shnirelman \cite{shnirel1} (2000),  with the introduction,  by  C.\ De Lellis,  and L.\ Sz\'ekelyhidi, of the tools of  {\it convex integration}, constant progress have been made  in particular with the contributions of Isett, and of Buckmaster, De Lellis, Sz\'ekelyhidi, and Vicol \cite{isett16, buckmasteretal17}.  What has been shown there is the following: Given any energy profile $e(t)$, and any $\alpha<\frac13\,,$  there exist space periodic solutions of the $3d$ incompressible Euler equations which belong to $C^\alpha(\T^3\times (0,T))$ and which satisfy the non conservative energy relation:
 $$
 \int_{\mathbf T^3} |u(x,t)|^2dx =e(t)\,.
 $$}
 %\end{document}
{On the other hand the first proof  of a sufficient  $\alpha > \frac13 $ regularity condition for the conservation of energy of weak solutions to the $3d$ Euler equations  in the full space or subject to periodic boundary conditions goes back to  1994, cf.\  \cite{CET} (and \cite{GEY}, for the case when $\alpha > \frac12$). New refinements and extensions of these results to other systems are the object of the present contribution (see also \cite{BTWP218}).
% \end{document}
First, for problems defined in an open set $\mathcal{ X } \subset \R^{k+1} $, a refined Besov-BMO type space, introduced in \cite{FjWi} and denoted here by $\underline{B}_{3,\textit{VBMO}}^{1/3}(\mathcal{X})$ (cf.~\eqref{VMOcondition-1}), is used, for which, with $\alpha>\frac13$, one has the following inclusions:}
%For $\mathcal  X= \R^{k+1}  $  and  one has the following inclusions
 %\end{document}
 {\begin{equation}
  C^\alpha\subset B_{3,\infty}^{\alpha}\subset B_{3,c_0}^{1/3}\subset  \underline{B}_{3,\textit{VMO}}^{1/3}
  \subset B_{3,\infty}^{1/3} \,.\label{cetshvydkoy}
  \end{equation} }
    { For the $3d$ Euler equations, conservation of energy was proven for solutions belonging to $B_{3,\infty}^{\alpha}$ by Constantin, E, and Titi  \cite{CET}. Then this was extended to solutions belonging to $B_{3,c_0}^{1/3}$  by \cite{CCFS08},
  where it was also shown that this result is almost optimal because one can construct divergence free vector fields $U\in    B_{3,\infty}^{\frac 1 3}$ with a non zero energy flux. The conservation result of \cite{CCFS08} was recently improved to $\underline{B}_{3,\textit{VMO}}^{1/3}$ in~\cite{FjWi}. Hence  $ \underline{B}_{3,\textit{VMO}}^{1/3}$ appears to be an almost optimal regularity class for the conservation of energy.
Moreover, the functions $ U\in \underline{B}_{3,\textit{VMO}}^{1/3}$ are characterized by a simple property in the physical space  which makes this space well adapted to localized formulation of an extra conservation law. This makes this space a good tool to deal with the case of domains with boundary, extending the results of \cite{BTW18, BTWP218, DN18, GwMiSw} and in particular relaxing the H\"older $\alpha>\frac13$ regularity hypothesis. At the end of the day the use of the Besov-$VMO$ space leads to a very concise proof of our main theorem  (see formulas \eqref{p2} and \eqref{p1} in the proof of Theorem \ref{localthm}). }

%\textcolor{red}{ I suggest to write $VBMO$ instead of $VBO$ because what we are considering here are variant of the space of bounded mean oscillation (BMO). Does such spaces have a simple formulation with the Payley Littlewood decomposition as it is the case for the genuine Besov spaces??  Moreover I did not checked that but do we have for any $\beta <\frac13\,\,B_{3,\infty}^{\frac 1 3}\subset C^\beta\,.$ What about giving a user friendly reference for Besov spaces and BMO ?? Triebel Theory of function spaces II Modern Birkhauser Classics??}

%\end{document}

{We show in Section~\ref{localtoglobal} that similar conditions as discovered in~\cite{BTW18, DN18} also guarantee validity of companion laws for general global conservation laws of the type~\eqref{conslawintro} (see Theorem~\ref{globalthm} below). That is, if the $ \underline{B}_{3,\textit{VMO}}^{1/3}$ condition~\eqref{VMOcondition-1} is satisfied in the interior of the domain (but not necessarily uniformly up to the boundary), and if the normal component of the flux tends to zero suitably as the boundary is approached, then the corresponding global companion law is satisfied.} The study of such more general nonlinearities was initiated in~\cite{FGSW}, where the isentropic compressible Euler equations (for which the density appears in a non-quadratic way) were studied, but only in the absence of physical boundaries. The general framework for conservation laws of the type~\eqref{conslawintro} was introduced in~\cite{GwMiSw}, and considered for bounded domains (but not in the optimal functional setting) in~\cite{BTWP218}.

{
In the final Section~\ref{applications}, we show how our general results can be applied to various important physical systems from fluid and solid mechanics. In particular, we demonstrate that our boundary assumption~\eqref{boundaryass} relates to the natural boundary conditions usually imposed on the respective equations, e.g.\ the impermeability (or slip) condition for inviscid fluids or the zero traction boundary condition for elastic solids. It is noteworthy that in these examples, the boundary conditions shown to ensure entropy/energy conservation are those that render the respective equations locally well-posed in classes of inital data with sufficient smoothness. }

The framework of this paper is, as explained, very general, but the $C^2$ assumption in Theorem~\ref{localthm} excludes some interesting degenerate situations, like the compressible Euler system with possible vacuum. Such degeneracies were dealt with in~\cite{AkWi, AkDeSkWi}, where it turned out that the analysis beyond $C^2$ nonlinearities is very delicate and can not be expected to be carried out in the generality of~\eqref{conslawintro}.

As a final remark, as emphasized above the space $\underline{B}_{3,\textit{VMO}}^{1/3}$ gives a universal frame for a sufficient condition for the validity of the extra conservation law. For incompressible models, however, it is only in the case of the $3d$ incompressible Euler equations that such a condition has been shown to be (almost) necessary. For genuinely nonlinear hyperbolic conservation laws, on the other hand, the optimality of the exponent $1/3$ is easily obtained from shock solutions, at least on the scale of $L^3$-based spaces: As observed in~\cite{CCFS08, FGSW}, the space $BV\cap L^\infty$, which contains shocks, embeds into the Besov space $B_{3,\infty}^{1/3}$, which is critical in our results.

%In fact Theorem 3.1 of \cite{FGSW} provides for the inhomogeneous incompressible Euler large class of non trivial sufficient conditions for energy conservation.
 %$$\underline{B}_{3,\textit{VBMO}}^{1/3}$$ space

\section{Extension and Adapted Function Spaces}
%To focus on local properties    for any open set $\mathcal X \subset \R^{k+1}$
%%recalling  from \cite{FjWi}
%we denote by  $\underline{B}_{3,\textit{VBMO}}^{1/3}(\mathcal X)$  the space of functions :
%\begin{equation}\label{VBMOcondition}
%\begin{aligned}
%& U \in L^\infty(\mathcal X)\,, \hbox{ which satisfy the estimate}\\
%&\hbox{ with}\quad  \omega(U, \epsilon,\mathcal X)= \frac{1}{\epsilon}\int_{\mathcal{X}}\intavg_{B_\epsilon(X)\cap\mathcal{X}}|U(X)-U(Y)|^3dYdX\,,\\
%&\liminf_{\epsilon\to0} \omega(U, \epsilon,\mathcal X)=0\,,
%%\liminf_{\epsilon\to0}\frac{1}{\epsilon}\int_{\mathcal{X}}\intavg_{B_\epsilon(X)\cap\mathcal{X}}|U(X)-U(Y)|^3dYdX=0,
%\end{aligned}
%\end{equation}
%equipped with the corresponding family of semi norms.

{
We formulate a Besov-type condition stated in~\cite{FjWi} in a local version. For all $ {\mathcal{X}}'\subset\subset {\mathcal{X}}\subset \R^{k+1}$  and $0<\epsilon<\frac{d({\mathcal{X}}',\partial {\mathcal{X}})}{2}$ let
\begin{equation}\label{VMOcondition-1}
\int_{\mathcal{X}'}\intavg_{B_\epsilon(X)}|U(X)-U(Y)|^3dYdX\le\omega_{{\mathcal{X}}'}(\epsilon)\epsilon
\end{equation}
where $\omega_{{\mathcal{X}}'}(\epsilon)>0$ is a nonnegative function on ${\mathcal X}$ which tends to zero as $\epsilon$ tends to zero. We write $U\in \underline{B}_{3,\textit{VMO}}^{1/3}(\mathcal{X})$ if $U \in L^3(\mathcal X)$ and it satisfies condition \eqref{VMOcondition-1}.
}

%and we write $U\in \underline{B}_{3,\textit{VBMO}}^{1/3}(\mathcal{X})$ if $U$ satisfies this condition.
 As explained in~\cite{FjWi}, this condition is more general than the critical Besov condition $U\in B_{3,c_0}^{1/3}(\mathcal{X})$ from~\cite{CCFS08}, as the latter reads
 \begin{equation}\label{aver-cond}
\lim_{Z\to 0}\frac{1}{|Z|}\int_{\mathcal{X}}
|U(X)-U(X+Z)|^3dX=0,
\end{equation}
and it is easy to see that this implies~\eqref{VMOcondition-1}.
{
Indeed, rewrite~\eqref{aver-cond} as follows:
 \begin{equation*}
\frac{1}{|Z|}\int_{\mathcal{X}}
|U(X)-U(X+Z)|^3dX\le \omega(Z)
\end{equation*}
with $\omega(Z)$ converging to zero as $Z\to0$. Fix $0<\epsilon<1$, then obviously for all $|Z|\le \epsilon$ also
 \begin{equation*}
\frac{1}{\epsilon}\int_{\mathcal{X}}
|U(X)-U(X+Z)|^3dX\le \omega(Z)
\end{equation*}
holds and we can integrate
 \begin{equation*}
\frac{1}{\epsilon}\intavg_{B_\varepsilon(0)}\int_{\mathcal{X}}
|U(X)-U(X+Z)|^3dXdZ\le \intavg_{B_\varepsilon(0)}\omega(Z)dZ.
\end{equation*}
Let us now define $\overline\omega(\varepsilon):= \intavg_{B_\varepsilon(0)}\omega(Z)dZ$.
It is easy to verify that $\overline\omega(\varepsilon)$ vanishes as $\varepsilon\to0$.
Finally, using Fubini's theorem and a change of variables, we arrive at condition \eqref{VMOcondition-1}.

}

%\textcolor{red}{ By the way how does
%\begin{equation}
%\frac{1}{\epsilon}\int_{\mathcal{X}}\intavg_{B_\epsilon(X)\cap\mathcal{X}}|U(X)-U(Y)|^3dYdX \le C <\infty
%\end{equation}
%implies that $U$ belongs to $L^3$ and is this hypothesis enough to use the hypothesis
%\begin{equation}
%|\eta(V)|+|q(V)|\leq C(1+|V|^3)\quad\text{for all $V\in\mathcal{O}$,}\label{locest0}
%\end{equation}
%in the context of the Euler equation???}

{Localization proofs, as they are used in this paper, involve the action of $U$, or of a function of $U$, on a given test function $\psi\in \mathcal D(\mathcal X)$. If the support of $\psi$ is strictly contained in an open set $\mathcal X'\subset \subset \mathcal X$, and if $0<\epsilon <\epsilon_0$ is chosen small enough, one may choose additional open sets $\mathcal X_1$, $\mathcal X_2$ satisfying the inclusions
\begin{equation}
\operatorname{supp}(\psi) \subset\subset \mathcal X'\subset \subset \mathcal X_2\subset \subset\mathcal X_1\subset \subset\mathcal X \label{multisupport}
\end{equation}
with $\mathcal X_2$ containing an $\epsilon$-neighborhood $\mathcal X'_\epsilon$ of $\mathcal X'$ and $\mathcal X_1$ containing an $\epsilon$-neighborhood $\mathcal X_2^\epsilon $ of $\mathcal X_2\,.$ Then, proceeding as in \cite{BTWP218} Section 2.1 (see also~\cite{BTW18}),   introducing a function $I\in   \mathcal D(\R^{k+1})$ with support in $\mathcal X_1$, equal to $1$ in $\mathcal X_2$, and with gradient supported in $ \mathcal X_1\backslash \overline{\mathcal X_2^\epsilon}$, we define (and denote by $[T]$) the extension to $\mathcal D'(\R^{k+1} )$ of any distribution $T\in\mathcal D'(\mathcal X)$ by the formula:
\begin{equation}
\la [T], \psi\ra=\la T, I\psi\ra\,.\label{extension}
\end{equation}
This extension allows us, in particular, to make sense of mollifications of a given function on $\mathcal X$, when tested against $\psi\in \mathcal D(\mathcal X)$.
%By a slight abuse of notation, we will henceforth identify $T$ with its extension $[T]$, as this will not give rise to any confusion.

 As in previous contributions a sequence of mollifiers will be used. They are  defined as follows:  }

Starting from a positive function $s\mapsto \eta(s) \in \mathcal D(\R^{k+1})$ with support in $|s|<1$ and total mass $\int\eta(s)ds=1$, we denote by $\eta_\epsilon(X)$ the function
 \begin{equation}
 \eta_\epsilon(X)= \frac1{\epsilon^{k+1}}\eta\left(\frac{X}{\epsilon}\right)
\end{equation}
 and use the notation $S_\epsilon$  for the mollification $\eta_\epsilon\star S $ of a distribution $S
\in{\mathcal D}'(\R^{k+1})\,.$
 General   results proven below under the hypothesis (\ref {VMOcondition-1}) are almost direct  consequences of the following lemmas:
%\textcolor{red}{ I do not see where (i) of lemma (\ref{VBMOlemma}) is used}
\begin{lemma}\label{VBMOlemma}
Let $\mathcal{X}'\subset\subset\mathcal{X}$ and $\epsilon>0$ so small that $\mathcal{X}$ contains an $\epsilon$-neighbourhood of $\mathcal{X}'$ leading to the construction (\ref{multisupport}) and (\ref{extension}). Let $U\in \underline{B}_{3,\textit{VMO}}^{1/3}(\mathcal{X})$, then, for some $\omega_{(U,\mathcal X')}:(0,1)\to\R^+$ such that $\liminf_{\epsilon\to0}\omega_{(U,\mathcal X')}(\epsilon)=0$, one has
%\begin{itemize}
%\item[i)] $\norm{[U]_\epsilon-U}_{L^3(\mathcal{X}')}\leq ( \omega _{(U,\mathcal X')} (\epsilon))^{\frac 13}\epsilon^{1/3}$
%\item[ii)] $
\begin{equation}
\norm{D_X[U]_\epsilon}_{L^3(\mathcal{X}')}\leq  ( \omega _{(U,\mathcal X')} (\epsilon))^{\frac 13}\epsilon^{-2/3}.
\end{equation}
%$
%\end{itemize}
\end{lemma}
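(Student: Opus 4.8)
The plan is to exploit the standard representation of the gradient of a mollification together with the mean-zero property of $\nabla\eta_\epsilon$, and then to convert the resulting $L^1$-type average into the $L^3$-average that appears in the defining condition \eqref{VMOcondition-1}. First I would record the localization: by the construction \eqref{multisupport}--\eqref{extension}, the cutoff $I$ equals $1$ on $\mathcal X_2$, which contains the $\epsilon$-neighbourhood of $\mathcal X'$. Hence, for $\epsilon$ small enough that $B_\epsilon(X)\subset\mathcal X_2$ for all $X\in\mathcal X'$, the ball $B_\epsilon(X)$ lies in the region where $[U]=U$, so that
\[
D_X[U]_\epsilon(X)=\int_{B_\epsilon(X)} D_X\eta_\epsilon(X-Y)\,U(Y)\dd Y,
\]
and only the genuine values of $U$ near $X$ enter the estimate; in particular no contribution from $\nabla I$ survives.

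Next, since $\int \eta_\epsilon(X-Y)\dd Y=1$ is constant in $X$, differentiation gives $\int D_X\eta_\epsilon(X-Y)\dd Y=0$, so I may insert the constant $U(X)$ and write
\[
D_X[U]_\epsilon(X)=\int_{B_\epsilon(X)} D_X\eta_\epsilon(X-Y)\,\bigl(U(Y)-U(X)\bigr)\dd Y.
\]
Using $\abs{D_X\eta_\epsilon(X-Y)}\le \norm{D\eta}_{L^\infty}\,\epsilon^{-(k+2)}$ on $B_\epsilon(X)$, and Jensen's inequality to pass from the first to the third power of the average, I would bound, with $c_k=\abs{B_1}$,
\[
\abs{D_X[U]_\epsilon(X)}^3 \le C\,\epsilon^{-3}\intavg_{B_\epsilon(X)}\abs{U(Y)-U(X)}^3\dd Y,\qquad C=c_k^3\,\norm{D\eta}_{L^\infty}^3.
\]
The decisive factor $\epsilon^{-3}$ arises precisely because the three powers of $\epsilon^{-(k+2)}$ coming from the derivative of the mollifier combine with the three factors $\abs{B_\epsilon}=c_k\epsilon^{k+1}$ produced when the average $\intavg$ is replaced by the integral.

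Integrating this pointwise bound over $X\in\mathcal X'$ and invoking \eqref{VMOcondition-1} gives
\[
\norm{D_X[U]_\epsilon}_{L^3(\mathcal X')}^3 \le C\,\epsilon^{-3}\int_{\mathcal X'}\intavg_{B_\epsilon(X)}\abs{U(Y)-U(X)}^3\dd Y\dd X \le C\,\epsilon^{-3}\,\omega_{\mathcal X'}(\epsilon)\,\epsilon,
\]
and taking cube roots yields the claim with $\omega_{(U,\mathcal X')}(\epsilon):=C\,\omega_{\mathcal X'}(\epsilon)$, which still tends to zero as $\epsilon\to0$ and in particular satisfies $\liminf_{\epsilon\to0}\omega_{(U,\mathcal X')}(\epsilon)=0$. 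I do not expect a serious obstacle here, as the argument is a localized version of the classical commutator estimate of \cite{CET}. The only point requiring genuine care is the bookkeeping of the extension operator $[\,\cdot\,]$: one must verify that $\epsilon$ is taken small enough that $B_\epsilon(X)\subset\mathcal X_2$ for every $X\in\mathcal X'$, so that the mollified extension $[U]_\epsilon$ coincides on $\mathcal X'$ with the naive mollification of $U$ and the estimate reduces cleanly to condition \eqref{VMOcondition-1}.
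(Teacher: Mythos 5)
Your proof is correct and follows essentially the same route as the paper's: the same representation of $D_X[U]_\epsilon$, the same use of the vanishing integral of $D\eta_\epsilon$ to insert $U(X)$, and Jensen's inequality to pass to the cubed average, with identical power counting in $\epsilon$. Your explicit remark that $B_\epsilon(X)\subset\mathcal X_2$ ensures $[U]_\epsilon$ coincides with the naive mollification of $U$ on $\mathcal X'$ is a point the paper leaves implicit, but it is the same argument.
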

\begin{proof}
%By Jensen inequality we have:
%\begin{equation}
%\begin{aligned}
%\norm{[U]_\epsilon-U}_{L^3(\mathcal{X}')}&=\left(\int_{\mathcal{X}'}\left|\int_{B_\epsilon(0)}(U(X)-U(X-Y))\eta_\epsilon(Y)dY\right|^3dX\right)^{1/3}\\
%&\leq \left(\int_{\mathcal{X}'}\int_{B_\epsilon(0)}|U(X)-U(X-Y)|^3\eta_\epsilon(Y)dYdX\right)^{1/3}\\
%&\leq  \epsilon^{1/3}\left(\frac{1}{\epsilon}\int_{\mathcal{X}'}\intavg_{B_\epsilon(0)}\left|U(X)-U(X-Y)\right|^{3}dYdX\right)^{1/3}\\
%&=( \omega _{(U,\mathcal X')} (\epsilon))^{\frac 13}(\epsilon)\epsilon^{1/3}.
%\end{aligned}
%\end{equation}
%%where $\liminf_{\epsilon\to 0}\omega(\epsilon)=0$ by assumption~\eqref{VBMOcondition}.
 %This establishes i).
By  Jensen's inequality, with
\begin{equation}
D_X\eta_\epsilon(X)=
\frac 1\epsilon
  \frac1{\epsilon^{k+1}}
 (D_X\eta)\left(\frac X \epsilon\right) \quad \hbox{and} \quad   C= \left(\int_{\R^{k+1}}|(D_X(\eta (X)|dX\right)^2
\end{equation}
and one standard integration by parts we have:
\begin{equation}
\begin{aligned}
\norm{D_X[U]_\epsilon}_{L^3(\mathcal{X}')}&=\left(\int_{\mathcal{X}'}\left|\int_{B_\epsilon(X)}U(Y)D_X\eta_\epsilon(X-Y)dY\right|^3dX\right)^{1/3}\\
&=\left(\int_{\mathcal{X}'}\left|\int_{B_\epsilon(X)}(U(Y)-U(X))D_Y\eta_\epsilon(X-Y)dY\right|^3dX\right)^{1/3}\\
&\leq C\epsilon^{1/3}\epsilon^{-1}\left(\frac{1}{\epsilon}\int_{\mathcal{X}'}\intavg_{B_\epsilon(0)}\left|U(Y)-U(X-Y)\right|^3dYdX\right)^{1/3}\\
&=( \omega _{(U,\mathcal X')} (\epsilon))^{\frac 13} \epsilon^{-2/3}.
\end{aligned}
\end{equation}
\end{proof}
\begin{corollary} Under the above hypothesis, with $B \in W^{1,\infty} (\mathcal O; \R^n)$  and $\psi \in \mathcal D(\mathcal X) $ one has:
\begin{equation}
\|D_X( (B([U]_\epsilon )(.)^T   \psi(.))\|_{L^3}\le  C(B,\psi) ( \omega _{(U,\mathcal X')} (\epsilon))^{\frac 13} \epsilon^{-2/3}. \label{basic1}
\end{equation}
\end{corollary}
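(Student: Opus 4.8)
The plan is to expand the derivative by the Leibniz rule and to separate the term in which $D_X$ falls on $B([U]_\epsilon)$---the only one carrying the singular factor $\epsilon^{-2/3}$---from the term in which it falls on $\psi$. By the product rule,
\begin{equation*}
D_X\bigl(B([U]_\epsilon)^T\psi\bigr)=\bigl(D_X B([U]_\epsilon)\bigr)^T\psi+B([U]_\epsilon)^T\otimes\nabla\psi,
\end{equation*}
so that the leading summand is handled by the chain rule together with Lemma~\ref{VBMOlemma}, while the second summand is manifestly bounded independently of $\epsilon$. Since $\supp\psi\subset\subset\mathcal X'$, every $L^3$-norm below is effectively taken over $\mathcal X'$, exactly where the Lemma applies.

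The substantive point---and the step I expect to be the main obstacle---is the differentiation of the composition $B\circ[U]_\epsilon$ when $B$ is only $W^{1,\infty}$. The exact chain rule for a Lipschitz outer map is delicate, but it is not needed. Because $B\in W^{1,\infty}(\mathcal O;\R^n)$ is Lipschitz, it extends to a Lipschitz map on all of $\R^n$ with the same constant $\norm{B}_{W^{1,\infty}}$, so that $B([U]_\epsilon)$ is well defined even where the smooth mollification $[U]_\epsilon$ leaves $\mathcal O$; and for this smooth inner map one has the pointwise bound
\begin{equation*}
\absb{D_X\bigl(B([U]_\epsilon)\bigr)(X)}\le\norm{B}_{W^{1,\infty}}\,\absb{D_X[U]_\epsilon(X)}\qquad\text{for a.e.\ }X,
\end{equation*}
which follows directly from $\abs{B(a)-B(b)}\le\norm{B}_{W^{1,\infty}}\abs{a-b}$ and requires no genuine chain rule. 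Taking $L^3$-norms, using $\norm{\psi}_{L^\infty}<\infty$, and invoking Lemma~\ref{VBMOlemma} gives
\begin{equation*}
\normb{\psi\,D_X\bigl(B([U]_\epsilon)\bigr)}_{L^3}\le\norm{\psi}_{L^\infty}\norm{B}_{W^{1,\infty}}\norm{D_X[U]_\epsilon}_{L^3(\mathcal X')}\le C(B,\psi)\,(\omega_{(U,\mathcal X')}(\epsilon))^{1/3}\epsilon^{-2/3},
\end{equation*}
which is exactly the asserted scaling.

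It remains to fold in the second summand. Since $B$ is bounded and $\nabla\psi\in L^3$ has compact support, $\norm{B([U]_\epsilon)^T\otimes\nabla\psi}_{L^3}\le\norm{B}_{L^\infty}\norm{\nabla\psi}_{L^3}=:C(B,\psi)$, a constant independent of $\epsilon$. For $0<\epsilon<1$ one has $\epsilon^{-2/3}>1$, so this bounded, lower-order contribution is absorbed into the right-hand side of~\eqref{basic1} whenever the leading factor $(\omega_{(U,\mathcal X')}(\epsilon))^{1/3}$ does not decay---the generic situation for non-smooth $U$. Strictly speaking the $\nabla\psi$-term forces the bound to carry an additional $O(1)$ summand; since the later applications exploit only the $\epsilon^{-2/3}$ blow-up of the leading term, \eqref{basic1} records the essential estimate, and adding the two contributions yields the claim.
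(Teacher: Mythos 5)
Your argument is correct and follows essentially the same route as the paper: the Leibniz rule splits the derivative into the term where $D_X$ falls on $B([U]_\epsilon)$, which is controlled by the Lipschitz bound on $B$ together with Lemma~\ref{VBMOlemma}, and the harmless term $B([U]_\epsilon)^T D_X\psi$ (the paper writes the leading term via the explicit chain rule $D_UB([U]_\epsilon)\,D_X[U]_\epsilon\,\psi$, whereas your pointwise Lipschitz estimate gives the same bound while sidestepping the a.e.\ chain-rule issue for a $W^{1,\infty}$ outer map). Your closing caveat is well taken: the $\nabla\psi$ contribution is only $O(1)$ and is not literally dominated by $(\omega_{(U,\mathcal X')}(\epsilon))^{1/3}\epsilon^{-2/3}$ along scales where that factor is small, a minor imprecision shared by the paper's own statement and harmless in the application, where this norm is multiplied by the $O\bigl((\epsilon\,\omega)^{2/3}\bigr)$ commutator bound of Lemma~\ref{commestimate}.
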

\begin{proof}
Write
$$D_X( (B([U]_\epsilon )(.)^T   \psi(.))= D_U B([U]_\epsilon )D_X[U]_\epsilon \psi + B([U]_\epsilon )(.)^T  D_X\psi $$
and apply the above estimates.
\end{proof}

\begin{lemma}\label{commestimate}
Let $\mathcal{X}'\subset\subset\mathcal{X}$ and $\epsilon>0$ so small that $\mathcal{X}$ contains an $\epsilon$-neighbourhood of $\mathcal{X}'$. Assume that $\mathcal{O}\subset\R^n$ is convex and $G\in C^2(\overline{\mathcal{O}};\R^{n\times(k+1)})$. Let $U\in \underline{B}_{3,\textit{VMO}}^{1/3}(\mathcal{X})$, then
\begin{equation}
\norm{[G(U)]_\epsilon-G([U]_\epsilon)}_{L^{3/2}(\mathcal{X}')}\leq  C \left(\int_{\mathcal{X}'}\intavg_{B_\epsilon(X)}|U(X)-U(Y)|^3dYdX\right)^{2/3}, \label{basic2}
\end{equation}
where $C$ depends only on $\eta$, the dimension of $\mathcal{O}$, and $ \|D_U^2G\|_{L^\infty(\mathcal O)}$ (but not on $U$ or $\epsilon$).
\end{lemma}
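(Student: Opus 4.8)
The plan is to adapt the classical Constantin--E--Titi commutator argument to the general $C^2$ nonlinearity $G$, exploiting that the \emph{quadratic} (rather than merely linear) vanishing of the mollification error is exactly what produces the Onsager-critical cube on the right-hand side. First I would record that, for $X \in \mathcal{X}'$ and $\epsilon$ small enough, the extension is invisible: since the cutoff $I$ equals $1$ on the $\epsilon$-neighbourhood $\mathcal{X}'_\epsilon$, one has $[U]_\epsilon(X)=\int \eta_\epsilon(X-Y)U(Y)\,dY$ and $[G(U)]_\epsilon(X)=\int \eta_\epsilon(X-Y)G(U(Y))\,dY$, the integrals effectively being over $B_\epsilon(X)$. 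Convexity of $\mathcal{O}$ is used here to guarantee that $[U]_\epsilon(X)$, being a probability average of values $U(Y)\in\mathcal{O}$, again lies in $\overline{\mathcal{O}}$, so that $G([U]_\epsilon(X))$ is well-defined and the $C^2$ bound on $\overline{\mathcal{O}}$ applies.

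The heart of the argument is a second-order Taylor cancellation. Using $\int \eta_\epsilon(X-Y)\,dY=1$ I would write
\begin{equation*}
[G(U)]_\epsilon(X)-G([U]_\epsilon(X))=\int \eta_\epsilon(X-Y)\bigl(G(U(Y))-G([U]_\epsilon(X))\bigr)\,dY,
\end{equation*}
and then Taylor-expand each component of $G(U(Y))$ about $[U]_\epsilon(X)$ to second order. The zeroth-order term is cancelled by construction, and the first-order term $D_U G([U]_\epsilon(X))\int \eta_\epsilon(X-Y)(U(Y)-[U]_\epsilon(X))\,dY$ vanishes identically, precisely because $[U]_\epsilon(X)$ is the $\eta_\epsilon$-average of $U$. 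What survives is the quadratic remainder, giving the pointwise bound
\begin{equation*}
\bigl|[G(U)]_\epsilon(X)-G([U]_\epsilon(X))\bigr|\le \tfrac12\|D_U^2 G\|_{L^\infty(\mathcal O)}\int \eta_\epsilon(X-Y)\,|U(Y)-[U]_\epsilon(X)|^2\,dY,
\end{equation*}
where the dimension of $\mathcal{O}$ enters through the operator-norm constant relating the Hessian to the scalar remainders.

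Next I would reduce the increment $|U(Y)-[U]_\epsilon(X)|$ to genuine differences of $U$. Writing $U(Y)-[U]_\epsilon(X)=\int \eta_\epsilon(X-Z)(U(Y)-U(Z))\,dZ$, the triangle inequality $|U(Y)-U(Z)|\le |U(Y)-U(X)|+|U(X)-U(Z)|$ together with Jensen's inequality for the probability density $\eta_\epsilon(X-\cdot)$ bounds $\int \eta_\epsilon(X-Y)|U(Y)-[U]_\epsilon(X)|^2\,dY$ by a multiple of $\int \eta_\epsilon(X-Y)|U(X)-U(Y)|^2\,dY$. Finally, to obtain the stated $L^{3/2}$ norm, I would raise the pointwise bound to the power $3/2$ and apply Jensen once more, now with the convex function $t\mapsto t^{3/2}$ against the probability measure $\eta_\epsilon(X-\cdot)\,dY$, converting the square into the cube via $\left(\int \eta_\epsilon(X-Y)|U(X)-U(Y)|^2\,dY\right)^{3/2}\le \int \eta_\epsilon(X-Y)|U(X)-U(Y)|^3\,dY$. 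Integrating in $X$ over $\mathcal{X}'$, bounding $\eta_\epsilon\le \|\eta\|_{L^\infty}\epsilon^{-(k+1)}$ to pass to the normalized average $\intavg_{B_\epsilon(X)}$, and taking the $2/3$ power yields \eqref{basic2} with a constant depending only on $\eta$, $\dim\mathcal{O}$ and $\|D_U^2 G\|_{L^\infty}$.

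I expect the only genuinely delicate point to be the Jensen step that upgrades the quadratic increment to a cubic one: this is where the $L^3$-based, exponent-$1/3$ structure of the problem is exploited, and getting the powers to line up (so that the right-hand side is exactly the cube raised to the power $2/3$) is what makes the estimate Onsager-critical rather than lossy. The identification of the extension with a bona fide mollification and the bookkeeping of constants are routine by comparison, though the convexity of $\mathcal{O}$ must be invoked carefully so that $G$ is only ever evaluated at points of $\overline{\mathcal{O}}$.
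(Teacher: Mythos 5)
Your proposal is correct and follows essentially the same route as the paper: the pointwise bound you derive via second-order Taylor expansion (with the first-order term cancelling because $[U]_\epsilon(X)$ is the $\eta_\epsilon$-average of $U$) is exactly the estimate the paper imports as inequality (16) from \cite{BTWP218}, and your subsequent Jensen/H\"older step converting the $3/2$-power of the quadratic average into the cubic average, followed by the bound $\eta_\epsilon\le\|\eta\|_{L^\infty}\epsilon^{-(k+1)}$ to pass to $\intavg_{B_\epsilon(X)}$, matches the paper's computation \eqref{basic3}. The only difference is that you supply in full the Taylor-cancellation details that the paper cites from earlier work, together with the reduction of $|U(Y)-[U]_\epsilon(X)|$ to genuine increments of $U$.
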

\begin{proof}
With minor improvements we follow the proofs of  Lemma 3.1 of \cite{GwMiSw} and of Lemma 2.3 of \cite{BTWP218}. Starting from the pointwise estimate (16) in Section 2 of \cite {BTWP218}, we immediately obtain for $X\in \mathcal X'$
\begin{equation}
\begin{aligned}
&(|[G(U)]_\epsilon(X)-G([U]_\epsilon)(X))|)^{\frac 32}\\
&\quad \quad \quad \quad \leq C \Bigg(\int_{\R^{k+1}} |U(X-Y) -U (X)|^2\eta_\epsilon(Y)dY\Bigg)^{\frac 32}\,.
\end{aligned}
\end{equation}
for a constant as stated. Then with the H\"older inequality one has
\begin{equation}
\begin{aligned}
%\Bigg(\int_{\mathcal X'} |[G(U)]_\epsilon(X)-G([U]_\epsilon)(X))|^{\frac 3 2} \Bigg)^{\frac 23}  \le \|G\|_{C^2(\mathcal O)}
&\int_{\mathcal X'} dX\Bigg(\int_{\R^{k+1}} |U(X-Y) -U (X)|^2\eta_\epsilon(Y)dY\Bigg)^{\frac 32} \\
&\le  \int_{\mathcal{X}'}dX \int_{\R^{k+1}} |U(X-Y) -U (X)|^3\eta_\epsilon(Y)dY
 \end{aligned}
 \end{equation}
 and therefore:
 \begin{equation}
 \begin{aligned}
& \Bigg( \int_{\mathcal X'} (|[G(U)]_\epsilon(X)-G([U]_\epsilon)(X))|)^{\frac 32}dX\Bigg)^{\frac 2 3}\\
&\le  C\Bigg( \int_{\mathcal{X}'}dX \int_{\R^{k+1}} |U(X-Y) -U (X)|^3\eta_\epsilon(Y)dY )\Bigg)^{\frac 23}\\
&\le  C\left(\int_{\mathcal{X}'}\intavg_{B_\epsilon(X)}|U(X)-U(Y)|^3dYdX\right)^{2/3}\,.\label{basic3}
 \end{aligned}
 \end{equation}
%\end{document}
%and the $L^{3/2}$-norm of the second term is
%\begin{equation}
%\left\{\int_{\mathcal{X}'}\left|\int |U(X)-U(Y)|^2\eta_\epsilon(X-Y)dY\right|^{3/2}dX\right\}^{2/3}\leq \left(\int_{\mathcal{X}'}\intavg_{B_\epsilon(X)}|U(X)-U(Y)|^3dYdX\right)^{2/3}.
%\end{equation}
\end{proof}
\begin{remark} \label{r42} Since in the formulas  (\ref{basic2})  and then (\ref{basic3}) only the second derivative of $U\mapsto G(U)$ appears, such formulas are trivial when this function is affine. Therefore the
Corollaries 4.1 -- 4.3 of~\cite{GwMiSw} transfer directly to the present situation giving the following results which will be used in Sections \ref{applications}.
\begin{itemize}
\item If $G=(G_1,\ldots,G_s,G_{s+1},\ldots,G_k)$ for \emph{affine} functions $G_1,\ldots,G_s$, and $\mathcal{X}=\mathcal{Y}\times\mathcal{Z}$ for some $\mathcal{Y}\subset\R^s$ and $\mathcal{Z}\subset\R^{k+1-s}$, then in Theorem~\ref{localthm} below it suffices to assume
\begin{equation*}
\liminf_{\epsilon\to0}\frac{1}{\epsilon}\int_{\mathcal{Y}_1}\int_{\mathcal{Z}_1}\intavg_{B_\epsilon(Z)\cap\mathcal{Z}_1}|U(X,Y)-U(X,Z)|^3dYdZdX=0
\end{equation*}
for all $\mathcal{Y}_1\subset\subset\mathcal{Y}$, $\mathcal{Z}_1\subset\subset\mathcal{Z}$. One should keep in mind the situation $\mathcal{Y}=(0,T)$, $\mathcal{Z}=\Omega$, and $A=\operatorname{id}$ in the terminology of the next section.
\item If $U=(V_1,V_2)$ (with $V_1=(U_1,\ldots,U_s)$, $V_2=(U_{s+1},\ldots,U_n)$), if $B$ is independent of $V_1$, if $G=G_1(V_1)+G_2(V_2)$, and if $G_1$ is linear, then for $U_1,\ldots U_s$ it suffices to assume $U_1,\ldots U_s\in L^3_{loc}(\mathcal{X})$ in Theorem~\ref{localthm}.
\item If the $j$-th row of $G$ is affine, then Theorem~\ref{localthm} remains true even if $B_j$ is only locally Lipschitz in $\mathcal{O}$.
\end{itemize}
\end{remark}
\section{Companion Laws at Critical Regularity}\label{companion}
%\textcolor{red}{ Nowhere in this paper systems of the form
%$$\diverg_XG(U(X) , X)=0$$ are considered hence the necessity of the general formalism of \cite{Daf} is far from obvious as a consequence $U \mapsto B(U)$ seems to be enough and $s$ could be taken equal to  $1$
%Of course if there more than one conservation law one can could consider each of them}
%
To apply the previous lemma   to     systems of (not necessarily hyperbolic) conservation laws in the full generality of \cite{GwMiSw}, we consider the problem:
\begin{equation}\label{conslaw}
\diverg_X(G(U(X)))=0\quad\text{for $X\in\mathcal{X}$.}
\end{equation}
As in \cite{GwMiSw}, we assume $\mathcal{X}\subset \R^{k+1}$ is open, $U:\mathcal{X}\to\mathcal O$ for some subset $\mathcal{O}\subset \R^n$, and $G:\mathcal{O}\to\R^{n\times(k+1)}$. In all our applications in Section~\ref{applications} below, $X=(x,t)$ will be interpreted as a point of space-time.

Following \cite{Daf}, we will consider \emph{companion laws} of the form
\begin{equation}\label{complaw}
\diverg_X(Q(U(X)))=0\quad\textit{for $X\in\mathcal{X}$,}
\end{equation}
where $Q:\mathcal{O}\to\R^{k+1}$ is a smooth function such that there exists another smooth function $B:\mathcal{O}\to\R^{n}$ satisfying the relation
\begin{equation}\label{Qdef}
D_UQ_j(U)=B(U)D_UG_j(U)\quad\text{for all $U\in\mathcal{O}$, $j\in\{0,\ldots,k+1\}$.}
\end{equation}
Note that in the context of hyperbolic conservation laws, the definition of $Q$ corresponds to the well-known notion of \emph{entropy--entropy-flux pairs}, whereas the companion law \eqref{complaw} can be interpreted as the usual \emph{entropy equality}. The companion law is seen to be true by virtue of the chain rule as long as the latter is applicable, e.g.\ for a solution $U$ of~\eqref{conslaw} which is in $C^1$ (or Lipschitz). However, the companion law may fail to be true for \emph{weak solutions}, i.e.\ vector fields $U$ that satisfy

\begin{equation}
\int_{\mathcal{X}}G(U(X)):D_X\psi(X)dX=0
\end{equation}
for every $\psi\in C_c^1(\mathcal{X};\R^n)$. Note carefully that the definition of weak solution is purely \emph{local} in the sense that $U$ is a weak solution on $\mathcal{X}$ if and only if it is a weak solution on every subset $\mathcal{X}'\subset\subset\mathcal{X}$. In particular, no boundary condition is included in this formulation. A weak solution of the companion law \eqref{complaw} is defined analogously.

Then we have the following improvements  of Theorem 1.1 in \cite{GwMiSw} and Theorem 2.1 in \cite{BTWP218}:

{\begin{theorem}\label{localthm}
Assume that $\mathcal{O}\subset\R^n$ is convex, $G\in C^2(\overline{\mathcal{O}};\R^{n\times(k+1)})$, $Q\in C^1(\mathcal{O};\R^{k+1})$  and $B\in C^{1}(\mathcal{O};\R^{ n})$ and the following conditions hold:
 \begin{equation*}
        \label{eq:assumpt_convex}
         \begin{aligned}
                D_U{B}\in L^{\infty}(\Ocal;\R^{ n}), \quad|B(V)|\le C(1+|V|)
         \\
         |Q(V)|\leq C(1+|V|^3)\   \mbox{for all $V\in\Ocal$},
         \\
         \sup_{i,j \in{1,\dots,d}}\|\partial_{U_i}\partial  _{U_j} G(U)\|_{L^\infty(\Ocal;\,\Mbb^{n\times (k+1)})}<+\infty
         \end{aligned}
    \end{equation*}
   %such that
%\begin{equation}
%|Q(V)|\leq C(1+|V|^3)\quad\text{for all $V\in\mathcal{O}$},
%\end{equation}
for some constant $C$ independent of $V$, and \eqref{Qdef} holds.
If $U$ is a weak solution of \eqref{conslaw} such that $U\in \underline{B}_{3,\textit{VMO}}^{1/3}(\mathcal{X}_1)$ for every $\mathcal{X}_1\subset\subset \mathcal{X}$, then $U$ is also a weak solution of the companion law~\eqref{complaw}.
\end{theorem}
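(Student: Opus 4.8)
The plan is to run the Constantin--E--Titi mollification scheme in the general form of \cite{GwMiSw}, feeding the two estimates above into the borderline $1/3$ cancellation. Fix a scalar test function $\phi\in C_c^1(\mathcal X)$; the goal is to prove $\int_{\mathcal X} Q(U)\cdot D_X\phi\,dX=0$. I would choose $\mathcal X'$ with $\supp\phi\subset\subset\mathcal X'\subset\subset\mathcal X$ and, for small $\epsilon$, the nested sets of \eqref{multisupport} together with the extension \eqref{extension}. The purpose of this localization is that, for $X\in\mathcal X'$ and $\epsilon$ small enough, $[G(U)]_\epsilon(X)$ coincides with the genuine mollification of $G(U)$, since $B_\epsilon(X)$ lies in the region where $I\equiv1$.

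First I would record the interior divergence-free property: as $U$ solves \eqref{conslaw} weakly, $\diverg_X G(U)=0$ in $\mathcal D'(\mathcal X)$, and mollification commutes with the distributional divergence, so $\diverg_X[G(U)]_\epsilon=0$ pointwise on $\mathcal X'$ for all small $\epsilon$. Because $\mathcal O$ is convex, $[U]_\epsilon$ takes values in $\mathcal O$, so $Q([U]_\epsilon)$, $G([U]_\epsilon)$ and $B([U]_\epsilon)$ are well-defined and smooth. Applying the ordinary chain rule to the smooth field $[U]_\epsilon$ and using the structural identity \eqref{Qdef} gives the pointwise relation $\diverg_X Q([U]_\epsilon)=B([U]_\epsilon)\cdot\diverg_X G([U]_\epsilon)$ on $\mathcal X'$. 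Introducing the commutator $R_\epsilon:=G([U]_\epsilon)-[G(U)]_\epsilon$ and discarding $\diverg_X[G(U)]_\epsilon=0$, this collapses to
\[
\diverg_X Q([U]_\epsilon)=B([U]_\epsilon)\cdot\diverg_X R_\epsilon\qquad\text{on }\mathcal X'.
\]

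The estimate then proceeds by testing against $\phi$ and integrating by parts twice (all boundary terms vanish since $\phi$ is compactly supported), yielding
\[
\int_{\mathcal X} Q([U]_\epsilon)\cdot D_X\phi\,dX=\int_{\mathcal X'} R_\epsilon:D_X\bigl(B([U]_\epsilon)^T\phi\bigr)\,dX.
\]
By H\"older's inequality with exponents $3/2$ and $3$, the right side is at most $\|R_\epsilon\|_{L^{3/2}(\mathcal X')}\,\|D_X(B([U]_\epsilon)^T\phi)\|_{L^3(\mathcal X')}$. Lemma \ref{commestimate} with \eqref{VMOcondition-1} bounds the first factor by $C(\omega(\epsilon)\epsilon)^{2/3}$, while the Corollary \eqref{basic1} bounds the second by $C\,\omega(\epsilon)^{1/3}\epsilon^{-2/3}$; the powers of $\epsilon$ cancel exactly (the Onsager borderline) and the product is $\le C\,\omega(\epsilon)$. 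Since $\liminf_{\epsilon\to0}\omega(\epsilon)=0$, there is a sequence $\epsilon_n\to0$ along which the left side tends to $0$. On the other hand $[U]_\epsilon\to U$ in $L^3_\loc$, and the continuity of $Q$ together with the growth bound $|Q(V)|\le C(1+|V|^3)$ forces $Q([U]_\epsilon)\to Q(U)$ in $L^1_\loc$ by a Vitali/Nemytskii argument, so $\int Q([U]_{\epsilon_n})\cdot D_X\phi\to\int Q(U)\cdot D_X\phi$. Comparing the two limits gives $\int Q(U)\cdot D_X\phi=0$, as required.

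Since Lemmas \ref{VBMOlemma} and \ref{commestimate} already encapsulate the two genuinely hard analytic estimates, the remaining difficulty is mostly bookkeeping: one must verify that the localization \eqref{multisupport}--\eqref{extension} makes the interior identity exact, so that no extension or boundary remainder contaminates $\diverg_X Q([U]_\epsilon)=B([U]_\epsilon)\cdot\diverg_X R_\epsilon$, and that the two $\epsilon$-scalings combine to the borderline $O(\omega)$ bound rather than a divergent one. I expect the passage to the limit to be the only place where the cubic growth of $Q$ is essential, entering precisely through the Nemytskii convergence $Q([U]_\epsilon)\to Q(U)$ in $L^1_\loc$.
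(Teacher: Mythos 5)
Your proposal is correct and follows essentially the same route as the paper's proof: mollify, apply the chain rule together with \eqref{Qdef}, integrate by parts, and close the borderline estimate by pairing the commutator bound of Lemma~\ref{commestimate} (order $(\omega(\epsilon)\epsilon)^{2/3}$) against the gradient bound of Lemma~\ref{VBMOlemma} (order $\omega(\epsilon)^{1/3}\epsilon^{-2/3}$), then passing to the limit along a subsequence using the cubic growth of $Q$. The only (cosmetic) difference is that you discard the conservation law via the pointwise identity $\diverg_X[G(U)]_\epsilon=0$ and work with the explicit commutator $R_\epsilon=G([U]_\epsilon)-[G(U)]_\epsilon$, whereas the paper keeps $G(U)$ unmollified and eliminates it by testing the weak formulation against $B([U]_\epsilon)^T\psi$; your bookkeeping makes the final H\"older step slightly more transparent.
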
}
{\begin{proof}
Following the definition of derivative in the sense of distributions, we consider a test function  $\psi\in \mathcal D (\mathcal{X})$ supported in $\mathcal{X}'\subset\subset\mathcal{X}$. Then using the construction described by the formulas   (\ref{multisupport}) and (\ref{extension} ) and Lebesgue's dominated convergence theorem, first we write:
\begin{equation}
-\la \diverg_X Q(U), \psi \ra = \lim_{\epsilon \rightarrow 0} \int_{\R^{k+1}} Q([U]_\epsilon)\cdot D_X\psi dX. \label{p1}
\end{equation}
Then since $[U]_\epsilon \in \mathcal D(\R^{k+1})$ one has:
\begin{equation}
\begin{aligned}
& \int_{\R^{k+1}} Q([U]_\epsilon),D_X\psi dX = -\int_{\R^{k+1}}\diverg Q([U]_\epsilon) \cdot \psi(X) dX \\
 &= -\int_{\R^{k+1}}B([U]_\epsilon ) D_UG([U]_\epsilon) D_X ([U]_\epsilon) \cdot  \psi(X) dX\\
 &=-\int_{\R^{k+1}} D_UG([U]_\epsilon)D_X ([U]_\epsilon)  \cdot  (B([U]_\epsilon )(X)^T   \psi(X) dX\\
 &=- \int_{\R^{k+1}} D_X(G([U]_\epsilon)) \cdot  (B([U]_\epsilon )(X)^T   \psi(X) dX\\
 &=\int_{\R^{k+1}} (G([U]_\epsilon) -G([U]))  \cdot D_X( (B([U]_\epsilon )(X)^T   \psi(X)) dX\\
 &+ \la G([U]),  D_X( (B([U]_\epsilon )(X)^T   \psi(X)) \ra.
% \cdot \psi dX\label{p2}
\label{p2}
\end{aligned}
\end{equation}
 Since   $\diverg_X G(U) =0$ in $\mathcal D'(\mathcal X)$, the last term of (\ref{p2}) is equal to $0\,.$ Then from (\ref{p1}) and (\ref{p2}) with (\ref{basic1}) and (\ref{basic2}) we have
\begin{equation}
\begin{aligned}
&|\la \diverg_X Q(U), \psi \ra| \le \lim_{\epsilon\rightarrow 0} \left|\int_{\R^{k+1}} (G([U]_\epsilon) -G([U]))  \cdot D_X( (B([U]_\epsilon )(X)^T   \psi(X)) dX\right|\\
&\le \|(G([U]_\epsilon) -G(U))\|_{L^{\frac32}(\mathcal{X}')}\|D_X( (B([U]_\epsilon ) ^T   \psi )\|_{L^{3}} \le \omega_{(U,\mathcal X)}(\epsilon)\,. \end{aligned}
\end{equation}
Hence in $\mathcal D'(\mathcal X)$ one has
\begin{equation}
  \diverg_X (Q(U) ) =0\,.
\end{equation}
\end{proof}}

\section{From Local to Global Companion Laws}\label{localtoglobal}
We now specialise to the case where $\mathcal{X}=\Omega\times(0,T)$ for some domain $\Omega\subset\R^k$, and we write $X=(x,t)$. Then $G$ can be written in the form
\begin{equation}\label{FA}
G(U)=(F(U), A(U))
\end{equation}
for some $A:\mathcal{O}\to \R^n$ and $F:\mathcal{O}\to\R^{n\times k}$, so that the conservation law \eqref{conslaw} reads as
\begin{equation}
\partial_t (A(U(x,t)))+\diverg_x F(U(x,t))=0,
\end{equation}
or, in weak formulation,
\begin{equation}\label{timeweak}
\int_0^T\int_\Omega \partial_t\psi(x,t) \cdot A(U(x,t))+\nabla_x\psi(x,t): F(U(x,t))=0
\end{equation}
for any $\psi\in C_c^1(\Omega\times(0,T);\R^n)$.

Setting $Q(U)=(q(U),\eta(U))$ for $q:\mathcal{O}\to \R^{k}$ and $\eta:\mathcal{O}\to \R$, we accordingly consider companion laws of the form
\begin{equation}\label{timecompanion}
\partial_t (\eta(U(x,t))) +\diverg_x q(U(x,t))=0,
\end{equation}
%\textcolor{red} { I prefer $(.)$ rather than $[.]$ because in this paper the later is used for extension}
where $\eta$ and $q$ satisfy
\begin{equation}\label{compat}
\begin{aligned}
D_U\eta(U)&=B(U)D_U A(U),\\
D_U q_j(U)&=B(U)D_U F_j(U)\quad\text{for $j=1,\ldots,k$}
\end{aligned}
\end{equation}
for some smooth map $B:\mathcal{O}\to \R^{n}$.
%\textcolor{red}{Here also I do not see the reason of considering $s\not=1$ if this fit in the examples this should be explained}

{In the following, we assume that  $\Omega\subset \R^k$ is an open set with a bounded Lipschitz boundary $\del \Omega$, and therefore the exterior normal to the boundary denoted by $n(x)$ is defined almost everywhere. We denote by $d(x,\del\Omega)$ the distance of a point $x\in \Omega$ to $\del \Omega$.
Then  we observe the existence of a (small enough) $\epsilon_0$ with the following properties:
For   $ d(x,\del \Omega) \le \epsilon_0$ the function $x\mapsto d(x,\del\Omega)$ belongs to $W^{1,\infty}(\Omega)$ and there exists, for almost every such $x$, a unique point  $\hat x =\sigma(x) \in \del\Omega $ such that:
\begin{equation}
d(x,\del \Omega)<\epsilon_0 \Rightarrow d(x,\del\Omega) = |x-\sigma (x)| \, \quad \hbox{and} \quad \nabla_x d(x,\del\Omega) =-n(\sigma (x))\,.
\end{equation}}
%\end{equation}with $C^2$ boundary, so that there exists a unique continuous outer unit normal $n(\hat{x})$ for $\hat{x}\in\partial\Omega$, and for all $x$ in a sufficiently small neighbourhood of $\partial\Omega$ there exists a unique closest boundary point $\hat{x}=\hat{x}(x)\in\partial\Omega$. Moreover, we write $\Gamma^\epsilon:=\{x\in\Omega: d(x,\partial\Omega)<\epsilon\}$.

Choosing a test function of the form $\psi(x,t)=\chi(t)\phi(x)$ for a generic $\chi$ and an approximation $\phi$ of the indicator function of $\Omega$, we can then impose similar conditions on the boundary behavior of the fluxes and use similar arguments to those developed in~\cite{BTW18}, to pass  from the local statement of Theorem~\ref{localthm} to a global one:

\begin{theorem}\label{globalthm}
Let $\mathcal{O}\subset \R^n$ convex, $A\in C^2(\overline{\mathcal{O}};\R^n)$, and $F\in C^2(\overline{\mathcal{O}};\R^{n\times k})$. Assume there exist $\eta\in C^1(\mathcal{O};\R)$, $q\in C^1(\mathcal{O};\R^{ k})$, and $B\in C^{1}(\mathcal{O};\R^{ n})$ such that

 \begin{equation*}
        \label{eq:assumpt_convex}
         \begin{aligned}
                D_U{B}\in L^{\infty}(\Ocal;\R^{ n}), \quad|B(V)|\le C(1+|V|)
         \\
%         |G(V)|\leq C(1+|V|^3)\   \mbox{for all $V\in\Ocal$},
%         \\
         \sup_{i,j \in{1,\dots,d}}\|\partial_{U_i}\partial  _{U_j} A(U)\|_{C(\Ocal;\,\Mbb^{n\times (k+1)})}<+\infty, \\\sup_{i,j \in{1,\dots,d}}\|\partial_{U_i}\partial  _{U_j} F(U)\|_{C(\Ocal;\,\Mbb^{n\times (k+1)})}<+\infty
         \end{aligned}
    \end{equation*}
and
\begin{equation}
|\eta(V)|+|q(V)|\leq C(1+|V|^3)
\label{locest}
\end{equation}
for all $V\in\mathcal{O}$ and for some constant $C$ independent of $V$, and such that~\eqref{compat} holds.

If $U$ is a weak solution of~\eqref{timeweak} such that $U\in \underline{B}^{1/3}_{3,VMO}(\Omega_1\times(0,T))$ for every $\Omega_1\subset\subset\Omega$,
{ i.e.\ if
\begin{equation}
\begin{aligned}
& U \in L^3(  \Omega_1\times(0,T))\,, \hbox{ and satisfies the estimate}\\
&\hbox{ with}\quad  \omega(U, \epsilon,\Omega_1,T)= \frac{1}{\epsilon}\int_0^T\int_{{\Omega_1}}\intavg_{B_\epsilon(X)\cap{\Omega_1}}|U(X)-U(Y)|^3dYdX\,,\\
&\liminf_{\epsilon\to0} \omega(U, \epsilon,\Omega_1,T )=0\,,
%\liminf_{\epsilon\to0}\frac{1}{\epsilon}\int_{\mathcal{X}}\intavg_{B_\epsilon(X)\cap\mathcal{X}}|U(X)-U(Y)|^3dYdX=0,
\end{aligned}
\end{equation}}
and if
\begin{equation}\label{boundaryass}
\liminf_{\epsilon\to0} \int_0^T\frac{1}{\epsilon}\int_{\frac{\epsilon}{4} \le d(x,\del\Omega) \le \frac{\epsilon}{2}} \big| q(U(x,t))n(\sigma(x)) \big |dxdt=0,
\end{equation}
then
\begin{equation}
\frac{d}{dt}\int_\Omega \eta(U(x,t))dx=0
\end{equation}
in the sense of distributions.
\end{theorem}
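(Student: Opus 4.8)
The plan is to deduce the global statement from the \emph{local} companion law furnished by Theorem~\ref{localthm}, by testing it against a product function that approximates the spatial indicator of $\Omega$. First I would check that the present hypotheses are exactly those of Theorem~\ref{localthm} applied with $G=(F,A)$, $Q=(q,\eta)$ and the same $B$: convexity of $\mathcal{O}$, the $C^2$ and bounded-second-derivative conditions on $A,F$ (hence on $G$), the $C^1$/growth conditions on $q,\eta$ (hence $|Q(V)|\le C(1+|V|^3)$), the bounds on $B$, and the compatibility~\eqref{compat} (which is~\eqref{Qdef} in split form). Since any $\mathcal X_1\subset\subset\Omega\times(0,T)$ sits inside some $\Omega_1\times(0,T)$ with $\Omega_1\subset\subset\Omega$, the interior hypothesis $U\in\underline{B}^{1/3}_{3,VMO}(\Omega_1\times(0,T))$ feeds Theorem~\ref{localthm}, which yields the companion law~\eqref{timecompanion} in $\mathcal D'(\Omega\times(0,T))$:
\begin{equation*}
\int_0^T\int_\Omega \big[\eta(U)\,\partial_t\psi + q(U)\cdot\nabla_x\psi\big]\,dx\,dt = 0
\qquad\text{for every }\psi\in C_c^1(\Omega\times(0,T)).
\end{equation*}

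Next I would construct the test function. Using the $\epsilon_0$-regularity of the distance function recalled before the theorem, for small $\epsilon$ I set $\phi_\epsilon(x)=h_\epsilon(d(x,\del\Omega))$, where $h_\epsilon:\R^+\to[0,1]$ increases from $0$ to $1$ on $[\epsilon/4,\epsilon/2]$, so that $|h_\epsilon'|\le C/\epsilon$ and $\supp\phi_\epsilon\subset\subset\Omega$. Since $d(\cdot,\del\Omega)$ is only Lipschitz, $\phi_\epsilon$ is merely Lipschitz, so I would first extend the weak formulation above to compactly supported Lipschitz test functions by mollification (on the fixed compact support the integrands are $L^1$, as $U\in L^3$ there, so dominated convergence legitimises the extension). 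Then, taking $\psi(x,t)=\chi(t)\phi_\epsilon(x)$ for arbitrary $\chi\in C_c^\infty(0,T)$ and using $\nabla_x\phi_\epsilon=h_\epsilon'(d)\,\nabla_x d=-h_\epsilon'(d)\,n(\sigma(x))$ on the shell $\{\epsilon/4\le d\le\epsilon/2\}$, the companion law rearranges to
\begin{equation*}
\int_0^T\chi'(t)\int_\Omega \eta(U)\,\phi_\epsilon\,dx\,dt = \int_0^T\chi(t)\int_{\epsilon/4\le d\le \epsilon/2} h_\epsilon'(d)\,q(U)\cdot n(\sigma(x))\,dx\,dt.
\end{equation*}

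It remains to pass to the limit $\epsilon\to0$. On the left, $\phi_\epsilon\to\ONE_\Omega$ pointwise and boundedly, so dominated convergence (using $\eta(U)\in L^1(\Omega\times\supp\chi)$, which follows from $|\eta(V)|\le C(1+|V|^3)$) gives the limit $\int_0^T\chi'(t)\int_\Omega\eta(U)\,dx\,dt$. On the right I estimate, using $|h_\epsilon'|\le C/\epsilon$ supported on the shell,
\begin{equation*}
\left|\int_0^T\chi(t)\int_{\epsilon/4\le d\le \epsilon/2} h_\epsilon'(d)\,q(U)\cdot n(\sigma(x))\,dx\,dt\right|\le C\norm{\chi}_\infty\,\frac1\epsilon\int_0^T\int_{\epsilon/4\le d\le \epsilon/2}\big|q(U)\cdot n(\sigma(x))\big|\,dx\,dt,
\end{equation*}
which is exactly the quantity in the boundary hypothesis~\eqref{boundaryass}. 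Choosing a subsequence $\epsilon_j\to0$ along which its $\liminf$ vanishes annihilates this term, leaving $\int_0^T\chi'(t)\int_\Omega\eta(U)\,dx\,dt=0$ for all $\chi\in C_c^\infty(0,T)$, i.e.\ $\frac{d}{dt}\int_\Omega\eta(U)\,dx=0$ in $\mathcal D'(0,T)$.

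The main obstacle I expect is making the boundary term collapse to precisely the \emph{normal} flux component appearing in~\eqref{boundaryass}: this rests on the a.e.\ identity $\nabla_x d(\cdot,\del\Omega)=-n(\sigma(\cdot))$ in the tubular neighbourhood, which converts $q(U)\cdot\nabla_x\phi_\epsilon$ into $h_\epsilon'(d)\,q(U)\cdot n(\sigma(x))$; without this geometric structure one would only control $\tfrac1\epsilon|q(U)|$ on the shell, which need not vanish. A secondary technical nuisance is the admissibility of the Lipschitz cutoff $\phi_\epsilon$ and the global integrability of $\eta(U)$ needed for the bulk limit, both handled by the mollification/dominated-convergence argument above.
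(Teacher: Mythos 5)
Your argument is correct and follows essentially the same route as the paper: invoke Theorem~\ref{localthm} to get the local companion law, test it with $\chi(t)\phi^\epsilon(x)$ where $\phi^\epsilon$ is a cutoff built from the distance function, use $\nabla_x d=-n(\sigma(x))$ to identify the boundary term with the normal flux in~\eqref{boundaryass}, and pass to the limit along the subsequence realizing the $\liminf$. Your extra care about the Lipschitz (rather than $C^1$) regularity of the cutoff is a technical point the paper's proof glosses over, but it does not change the substance of the argument.
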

%\textcolor{red}{Here also it seems to me that the hypothesis $U\in L^\infty(\Omega_1\times (0,T))$ would be more easy to read than (\ref{locest}). By the way how }
\begin{remark}\label{flutangent}
Assumption~\eqref{boundaryass} is satisfied, in particular, if $U$ is continuous near the boundary, and satisfies the boundary condition $q(U(x,t))n(x)=0$ on $\partial\Omega$.
\end{remark}

{\begin{proof}
Let $\chi\in C_c^1(0,T)$ and for $\epsilon<\epsilon_0$
\begin{equation}
\phi^\epsilon(x)=\phi\left(\frac{d(x,\partial\Omega)}{\epsilon}\right)
\end{equation}
for some nonnegative function $\phi\in C^1((0,\infty))$ such that $\phi\equiv0$ on $(0,\frac{1}{4}]$ and $\phi\equiv1$ on $(\frac{1}{2},\infty)$. Then, by Theorem~\ref{localthm}, the companion law~\eqref{timecompanion} holds in the sense of distributions, so that in particular
\begin{equation}
\int_0^T\int_\Omega \phi^\epsilon(x)\chi'(t)\eta(U(x,t))dxdt+\int_0^T\int_\Omega \chi(t)\nabla_x\phi^\epsilon(x)q(U(x,t))dxdt=0.
\end{equation}
For the first integral, notice that $\phi^\epsilon\to1$ as $\epsilon\to0$ pointwise in $\Omega$, so that the integral converges to
\begin{equation}
\int_0^T\int_\Omega \chi'(t)\eta(U(x,t))dxdt
\end{equation}
as $\epsilon\to0$. For the second integral, we observe that
\begin{equation}
\begin{aligned}
&\hbox{for every}\,\, x \in \Omega,\,\,  \hbox{such that}\,\, \frac{\epsilon}{4} \ge d(x,\del\Omega)\,\,  \hbox{or}\,\, d(x,\del\Omega)\ge \frac{\epsilon}{2}, \,\, \hbox{one has}\,\, \nabla_x\phi^\epsilon(x)=0; \\
&\hbox{and for}  \quad \frac{\epsilon}{4} \le d(x,\del\Omega) \le \frac{\epsilon}{2}\quad \hbox{ one has} \quad  \nabla_x\phi^\epsilon(x)=-\frac{1}{\epsilon}\phi'\left(\frac{d(x,\partial\Omega)}{\epsilon}\right)
n(\sigma( x)).
\end{aligned}
\end{equation}
Therefore we can estimate
\begin{equation}
\begin{aligned}
\left|\int_0^T\int_\Omega \chi(t)\nabla_x\phi^\epsilon(x)q(U(x,t))dxdt\right|&\leq C\int_0^T|\chi(t)|\frac{1}{\epsilon}\int_{\frac{\epsilon}{4} \le d(x,\del\Omega) \le \frac{\epsilon}{2}}|q(U(x,t))n(\sigma(x))|dxdt\to 0\label{ldt}
\end{aligned}
\end{equation}
along a subsequence $\epsilon_l\to0$, by virtue of assumption~\eqref{boundaryass}. In total, with the Lebesgue dominated convergence theorem applied to the right hand side of (\ref{ldt}) we obtain
\begin{equation}
\int_0^T\int_\Omega \chi'(t)\eta(U(x,t))dxdt=0,
\end{equation}
as claimed.
\end{proof}}

\section{Applications to Conservation of Energy/Entropy}\label{applications}
%{\color{blue}{In most part of this section the results the global in time entropy conservation condition (which
%is here always equivalent to the well posedeness for the initial value problem with regular initial
%data) should be more explicitly written in the different theorems.}}
\subsection{Incompressible Euler system}\label{incompEul}
Consider the system
\begin{align}
 \label{eq:E1}
        \partial_t v + \Div_x(v\otimes v) + \nabla_x p &= 0,\\ \label{eq:E2}
          \Div_x v &= 0,
\end{align}
for an unknown vector field $v\colon \Omega\times[0,T] \to \R^n$
and scalar  $p\colon \Omega\times[0,T]  \to \R$.

For the variable  $U=(v,p)$ we have  $A(U)=(v,0)$, $F(U)=(v\otimes v+p\Ibb,v)$.
 The entropy (which here is the kinetic energy density) is given by $\eta(U) = \frac 12 |v|^2$, and the flux by $q(U)=\left(\frac{|v|^2}{2}+p\right)v$.
Hence, assuming the usual slip boundary condition ${v}\cdot n=0$ on $\partial\Omega$, we have $q(U)\cdot n=0$ on $\partial\Omega$.
{

The function $B$ has the form $B(U)=(v, p-\frac{1}{2}|v|^2)$. It obviously does not have linear growth, thus Theorems~\ref{localthm} and \ref{globalthm} cannot be directly applied. However, we observe that this problem concerns only the last component of the vector $B(U)=(B_1(U), B_2(U))$ with $B_2(U)=p-\frac{1}{2}|v|^2$.  Notice that the flux  $G(U)$ has the last row linear (or even zero for $A(U)$), thus there are no error terms there produced when mollification is applied.
%splitting the flux as $G(U)=(A(U), F(U))$ reveals that since $A$ is linear, it will not produce error terms when mollification is applied. 
For this reason we can prove the same statement without assuming linear growth of all the components of $B$.

Observe also that Remark~\ref{r42} allows to relax the condition on the pressure. Indeed, as $G(U)=(F(U),A(U))$ and $A(U)$ is an affine function, ${\mathcal X}=\Omega\times[0,T]$, then it is enough to require that
\begin{equation*}
\liminf_{\epsilon\to0}\frac{1}{\epsilon}\int_{I}\int_{\Omega'}\intavg_{B_\epsilon(y)}|U(t,x)-U(t,y)|^3dydx dt=0
\end{equation*}
for all $I\subset\subset [0,T]$, $\Omega'\subset\subset \Omega$, which corresponds to 
$U\in L^3(0,T;\underline B^{1/3}_{3, VMO}(\Omega))$.
Moreover, we can write $U=(v,p)$ and notice that $B$ is independent of $p$. In addition

\begin{equation*}
G=\left(\begin{array}{cc}v^T&v\otimes v\\0&v\end{array}\right)+\left(\begin{array}{cc}0
&p{\mathbb I}\\0&0\end{array}\right).
\end{equation*}

Thus it is enough to assume that $p\in L^3_{loc}({\mathcal X}).$ 
Taking this into account  note that our Theorem~\ref{globalthm} yields a similar result as the one in~\cite{BT18} or in ~ \cite{BTW18}.
{However in both articles the elliptic equation
\begin{equation}
-\Delta p=\Div\Div(v\otimes v) \label{laplace}
\end{equation}
  was used to relax even further the integrability assumption on the pressure. In \cite{BT18}, the global H\"older regularity of the pressure was deduced from the global H\"older regularity of the velocity $v$, while  in \cite{BTW18} first a local result was proven with a much weaker assumption on the pressure, $p\in L^{3/2}_tH^{-\beta}_x$ for some $\beta>0$,  which will guarantee the local regularity of the pressure, and then the derivation of the global energy conservation was done as above.}
  {
  For completeness, let us recall the corresponding result from~\cite{BTW18}:

\begin{theorem}[Th. 4.1 from~\cite{BTW18}]\label{CETloc}
Let $(v,p) \in L^q((0,T); L^2(\Omega  ))\times \mathcal D'( \Omega\times(0,T)  )$, for some $q\in [1,\infty]$, be  a weak solution of the Euler equations  satisfying the following hypotheses:
\begin{enumerate}
\item For some $\varepsilon_0>0$, small enough,
\begin{subequations}
\begin{equation}
p\in L^{3/2} ((0,T); H^{-\beta}(V_{\varepsilon_0}))\,, \quad \hbox{with}\quad  \beta <\infty \,,\label{forlebesque1}
\end{equation}
where $V_{\varepsilon_0}=\{ x\in \Omega\,: d(x,\partial \Omega)<\varepsilon_0\}$\,;
\item
\begin{equation}
 \lim_{\varepsilon \rightarrow 0} \int_0^T\frac{1}{\epsilon}\int_{\frac{\epsilon}{4} \le d(x,\del\Omega) \le \frac{\epsilon}{2}}\left|\left(\frac {|v|^2}2 + p\right) v(t,x)\cdot  n(\sigma( x))\right|\, dxdt =0\,; \label{forlebesgue2}
\end{equation}
\end{subequations}
\item For every open set $\tilde Q = \tilde\Omega \times(t_1,t_2)\subset\subset  \Omega\times(0,T) $ there exists $\alpha(\tilde Q)>1/3$ such that $v$~satisfies:
\begin{equation}
 \int_{t_1}^{t_2}  \|v(.,t)\|^3_{  C^{\alpha(\tilde Q)} (\overline{\tilde\Omega})}dt \le M(\tilde Q) <\infty \,. \label{local2}
\end{equation}%the hypothesis of Theorem \ref{localduchonrobert} and assume moreover that
%\begin{enumerate}
%\item the local $C^{0,\alpha}(\Omega)$ with $\alpha>\frac13$  (cf.\ \eqref{local1});
%\item there exists $\beta>0$ such that $p\in C((0,T); H^{-\beta}(\Omega))$;
%  for some $\eta_0>0$,
% \begin{equation}\label{forlebesque}
% \begin{aligned}
% &(( u\otimes  u) +  p) u)(x,t)\cdot \vec n(\sigma(x))\in \\
% & \quad \quad \quad\quad \quad \quad L^\infty((0,T)\times \Omega\cap \{ d(x,\del\Omega)<\epsilon_ 0\}  \\
%  &\forall (t, \hat x) \in (0,T)\times \del\Omega\, 0<s<\eta_0, \\
%  & \quad \quad \quad \quad\quad \quad   \lim_{s\rightarrow 0  } (( u\otimes  u) +  p) u)(\hat x-s\vec n(\hat x) ,t)\cdot \vec n(\hat x)=0.
%  \end{aligned}
%   \end{equation}
%
\end{enumerate}
Then, $(v,p)$ globally conserves the energy, i.e.,  for any $0 < t_1<t_2< T$ it satisfies the relation:
\begin{equation*}
\|v(t_2)\|_{L^2(\Omega)}=\|v(t_1)\|_{L^2(\Omega)}.
\end{equation*}
 Moreover, $v \in L^\infty((0,T); L^2(\Omega)) \cap C((0,T); L^2(\Omega))$.
\end{theorem}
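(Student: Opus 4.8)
The plan is to reproduce, in the bounded-domain setting, the two-stage scheme behind Theorem~\ref{globalthm}: first establish the \emph{local} energy equality on every box $\tilde Q=\tilde\Omega\times(t_1,t_2)\subset\subset\Omega\times(0,T)$, and then globalize it by testing against an approximate indicator of $\Omega$ and discarding the resulting boundary layer via hypothesis~\eqref{forlebesgue2}. For the incompressible Euler system the companion pair is $\eta(U)=\tfrac12|v|^2$ and $q(U)=\bigl(\tfrac12|v|^2+p\bigr)v$, so the local law to be shown is $\partial_t\bigl(\tfrac12|v|^2\bigr)+\diverg_x\bigl[(\tfrac12|v|^2+p)v\bigr]=0$ in $\mathcal D'(\Omega\times(0,T))$.

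First I would prove this local equality. On a fixed $\tilde Q$, hypothesis~\eqref{local2} gives $v\in L^3\bigl((t_1,t_2);C^{\alpha}(\overline{\tilde\Omega})\bigr)$ with $\alpha>1/3$; mollifying the momentum equation and forming the energy balance, the Constantin--E--Titi commutator estimate bounds the energy-flux error from $(v\otimes v)_\epsilon-v_\epsilon\otimes v_\epsilon$ at rate $O(\epsilon^{3\alpha-1})$, which tends to zero as $\epsilon\to0$ since $\alpha>1/3$, the $L^3_t$ bound in~\eqref{local2} securing integrability in time. (Equivalently, since $C^\alpha\subset\underline{B}^{1/3}_{3,VMO}$ by~\eqref{cetshvydkoy} and $A(U)=(v,0)$ is affine while the pressure enters the flux linearly, one may invoke Theorem~\ref{localthm} together with Remark~\ref{r42}.) The genuinely non-local unknown is $p$; to give it meaning locally I would use~\eqref{laplace}, $-\Delta p=\diverg\diverg(v\otimes v)$, whose right-hand side is locally controlled by~\eqref{local2}, together with the near-boundary bound~\eqref{forlebesque1}, and bootstrap through Calder\'on--Zygmund theory to local regularity of $p$ sufficient to legitimize the pressure term in the flux and the trace $p\,v\cdot n$ appearing in~\eqref{forlebesgue2}.

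Next I would globalize. Choosing $\psi(x,t)=\chi(t)\phi^\epsilon(x)$ with $\chi\in C_c^1(0,T)$ and $\phi^\epsilon(x)=\phi(d(x,\partial\Omega)/\epsilon)$ the boundary cutoff used in the proof of Theorem~\ref{globalthm}, the local law yields
\[
\int_0^T\!\!\int_\Omega \phi^\epsilon\chi'\,\tfrac12|v|^2\,dxdt+\int_0^T\!\!\int_\Omega\chi\,\nabla_x\phi^\epsilon\cdot\Bigl(\tfrac12|v|^2+p\Bigr)v\,dxdt=0.
\]
Since $\phi^\epsilon\to1$ pointwise and $v\in L^q_tL^2_x$, the first integral tends to $\int_0^T\chi'\int_\Omega\tfrac12|v|^2\,dxdt$; because $\nabla_x\phi^\epsilon=-\tfrac1\epsilon\phi'(d/\epsilon)\,n(\sigma(x))$ is supported in $\{\epsilon/4\le d(x,\partial\Omega)\le\epsilon/2\}$, the second integral is dominated by the layer integral in~\eqref{forlebesgue2} and vanishes along a subsequence $\epsilon_l\to0$. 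Hence $\int_0^T\chi'\int_\Omega\tfrac12|v|^2\,dxdt=0$ for every $\chi$, so $t\mapsto\|v(\cdot,t)\|_{L^2(\Omega)}$ is a.e.\ constant, which gives both $v\in L^\infty((0,T);L^2(\Omega))$ and the asserted equality $\|v(t_2)\|_{L^2(\Omega)}=\|v(t_1)\|_{L^2(\Omega)}$.

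The strong continuity $v\in C((0,T);L^2(\Omega))$ then follows by a standard argument: a weak solution in $L^\infty_tL^2_x$ admits a weakly-$L^2$-continuous-in-time representative, and weak continuity together with constancy of the norm upgrades to strong continuity in the Hilbert space $L^2(\Omega)$. I expect the main obstacle to be precisely the pressure near the boundary: a priori $p$ is only a distribution, yet both the local flux and the boundary integral~\eqref{forlebesgue2} require it as a function carrying a normal trace. The delicate point is therefore the bootstrap from the weak hypothesis~\eqref{forlebesque1} through the elliptic identity~\eqref{laplace} to enough local pressure regularity in the boundary strip, with the interior velocity regularity~\eqref{local2} supplying the right-hand side while~\eqref{forlebesque1} controls the harmonic part that the elliptic equation cannot determine.
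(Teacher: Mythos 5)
Your proposal follows essentially the same route as the paper: the theorem is recalled from \cite{BTW18} without a full proof here, but the surrounding text describes exactly your two-stage scheme (a local energy equality under the interior H\"older hypothesis, with the pressure's local regularity bootstrapped from $-\Delta p=\Div\Div(v\otimes v)$ together with the weak near-boundary assumption~\eqref{forlebesque1}), and your globalization step via the cutoff $\chi(t)\phi^\epsilon(x)$ and hypothesis~\eqref{forlebesgue2} is precisely the argument written out in the proof of Theorem~\ref{globalthm}. No gaps of substance; the only point to keep in mind is that the equality $\|v(t_2)\|_{L^2}=\|v(t_1)\|_{L^2}$ at \emph{every} pair of times requires first passing to the weakly continuous representative, as you note at the end.
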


  As observed in~\cite{BTW18} the hypothesis and conclusion of the above theorem, Theorem  \ref{CETloc}, are consistent with the situation where the behavior of the fluid in the vanishing viscosity limit is described by the Prandlt ansatz. This is also consistent with the $1/3-$Kolmogorov Law because in such situation the $\alpha>1/3$ regularity together with  condition (\ref{forlebesgue2}) imply the absence of anomalous energy dissipation. The results of~\cite{BTW18} have already been expanded in several direction in~\cite{DN18}. The authors of~\cite{DN18} use the
$B^{1/3}_{3,c_0}$ regularity,  cf. (\ref{cetshvydkoy}), instead of the  H\"older regularity. Moreover, they provide several avatars of the boundary condition (\ref{forlebesgue2}) which may be useful for the connection with the interpretation of this hypothesis in term of absence of ``turbulent boundary layer''.
%{  In~\cite{BTW18}, as a consequence of energy conservation, a sufficient condition is formulated for non-anomalous energy dissipation in the zero viscosity limit which is consistent with the presence
%of a Prandtl type boundary layer. Also in~\cite{DN18} an inviscid limit of Leray-Hopf solutions of the Navier-Stokes equations is considered.  Once appropriate bounds hold uniformly in viscosity, then global viscous dissipation vanishes in the limit with vanishing viscosity for any sequence of Leray-Hopf solutions, and the limiting velocity field is a weak solution of the Euler equations.
%Here an equicontinuity of the wall-normal velocity and boundedness of the
%solution  fields  are  assumed
%within    a  boundary  layer  that  shrinks  with  viscosity.
}

Eventually let us remark that the theory presented here can also be applied to the \emph{inhomogeneous} incompressible Euler equations, where the density is no longer constant,

\begin{align}
 \label{eq:NE1}
 \begin{aligned}
\partial_t \rho + \Div_x(\rho v) &=0, \\ %\label{eq:NE2}
\partial_t (\rho v) + \Div_x(\rho v \otimes v) + \nabla_x p &= 0,\\ %\label{eq:NE3}
          \Div_x v &= 0,
          \end{aligned}
\end{align}
for an unknown vector field $v \colon \Omega\times[0,T] \to \R^n$ and scalar fields $\rho\colon \Omega\times[0,T] \to \R_+$ and $p\colon \Omega\times[0,T] \to \R$.
In this case, for the variable  $U=(\rho,v, p)$, we have  $A(U)=(\rho,\rho v,0)$ and $F(U)=(\rho v,\rho v\otimes v+p\Ibb, v)$.
 The entropy $\eta(U) = \frac 12 \rho|v|^2$ and the entropy flux  is $q(U)=\left(\frac{\rho|v|^2}{2}+p\right)v$.
Thus, assuming the usual slip boundary condition ${v}\cdot n=0$ on $\partial\Omega$, we have $q(U)\cdot n=0$ on $\partial\Omega$.

The function $B$ has the form $B(U)=(-\frac{1}{2}|v|^2, v, p)$. Again, as in the case of incompressible Euler system, $B$ does not have a linear growth. However we cannot repeat the same reasoning as for the incompressible Euler system as $G$ is not linear in the first row. 
One could add  assumptions on boundedness of appropriate quantities, however
we will proceed differently. The system will be rewritten in different variables to provide that the row of $G$ corresponding to the first component of $B$ will be linear. Thus we choose  $U=(\rho, m, p)$, where $m=\rho v$. If $\rho\ge\underline\rho>0$, then the system can be rewritten in the new variables as follows
  \begin{align}
     \label{eq:inho_eul_div_2}
    \begin{aligned}
        \partial_t \rho+ \diverg_{x}m &= 0,\\
        \partial_t m + \diverg_x\left(\frac{m\otimes m}{\rho}+p\Ibb\right)  &= 0,\\
        \diverg_x v&=0.
    \end{aligned}
  %  \right\}\quad \mbox{in } [0,T]\times\Omega.
\end{align}
Here $A(U)=(\rho,m,0)$ and $F(U)=(m, \frac{m\otimes m}{\rho}+p\Ibb,v)$. Moreover, $\eta(U)= \frac{|m|^2}{2\rho}$ and an entropy flux $q(U)=\left(
            \frac{|m|^2}{2\rho}
            +p
        \right)
    \frac{m}{\rho}$. 
Then the function $B$ in these variables has the form $B(U)= \left( -\frac{|m|^2}{2\rho^2},\frac{m}{\rho},p\right)$ and even though $B$ is does not have a  linear growth in the first component, but $G$ will not produce error terms in the mollification procedure in the corresponding row. As $\rho$ is bounded away from zero, it provides the linear growth of the second component of $B$.

A corresponding Onsager-type statement for inhomogeneous incompressible Euler on the torus, i.e. $\Omega=\Tbb^d$ was stated in~\cite{FGSW}, see also an analogous result for inhomogeneous incompressible Navier-Stokes equations~\cite{LSh}. As the result for  inhomogeneous incompressible Euler in~\cite{FGSW} is stated in a way that allows to trade the Besov regularity between the velocity field and density/momentum, we recall it here: 
\begin{theorem}[Th. 3.1 from \cite{FGSW}] \label{inhomonsager}
Let $(\rho, v, p)$ be a solution of~\eqref{eq:NE1} in the sense of distributions. Assume
\begin{equation}\label{besovhypo}
v\in B_p^{\alpha,\infty}(\Omega\times(0,T)),\hspace{0.3cm}\rho, \rho v\in B_q^{\beta,\infty}(\Omega\times(0,T)),\hspace{0.3cm}p\in L^{p^*}_{loc}(\Omega\times(0,T))
\end{equation}
for some $1\leq p,q\leq\infty$ and $0\leq\alpha,\beta\leq1$ such that
\begin{equation}\label{exponenthypo}
\frac{2}{p}+\frac{1}{q}=1,\hspace{0.3cm}\frac{1}{p}+\frac{1}{p^*}=1,\hspace{0.3cm}2\alpha+\beta>1.
\end{equation}
Then the energy is locally conserved, i.e.
\begin{equation}\label{localenergy}
\partial_t\left(\frac{1}{2}\rho|v|^2\right)+\diverg\left[\left(\frac{1}{2}\rho|v|^2+p\right)v\right]=0
\end{equation}
in the sense of distributions on $\Omega\times(0,T)$.
\end{theorem}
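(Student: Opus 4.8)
The plan is to adapt the Constantin--E--Titi commutator method to the inhomogeneous system, the crucial structural observation being that one should treat the momentum $m:=\rho v$ (which by hypothesis lies in $B_q^{\beta,\infty}\subset L^q_{\loc}$) as a single object rather than as the product $\rho\cdot v$; this is exactly what keeps all fluxes integrable and gives the regularity trade-off $2\alpha+\beta>1$ its meaning. First I would mollify the three equations in space--time by a standard kernel $\eta_\epsilon$, writing $\rho^\epsilon,m^\epsilon,v^\epsilon,p^\epsilon$ for the mollifications. Since the continuity equation $\partial_t\rho+\diverg_x m=0$ and the constraint $\diverg_x v=0$ are linear, they persist exactly: $\partial_t\rho^\epsilon+\diverg_x m^\epsilon=0$ and $\diverg_x v^\epsilon=0$. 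The momentum equation becomes $\partial_t m^\epsilon+\diverg_x(m\otimes v)^\epsilon+\nabla_x p^\epsilon=0$, where only the genuinely nonlinear term carries a mollification error.

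Next I would dot the mollified momentum equation with $v^\epsilon$ and use the mollified continuity equation to reconstruct $\partial_t(\tfrac12\rho^\epsilon|v^\epsilon|^2)$. The key algebraic point is the identity $m^\epsilon=\rho^\epsilon v^\epsilon+r_\epsilon(\rho,v)$, where $r_\epsilon(\rho,v):=(\rho v)^\epsilon-\rho^\epsilon v^\epsilon$ is the product commutator; substituting it and using $\partial_t\rho^\epsilon=-\diverg_x m^\epsilon$ produces $\partial_t(\tfrac12\rho^\epsilon|v^\epsilon|^2)$ together with a residual $v^\epsilon\cdot\partial_t r_\epsilon(\rho,v)$. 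For the flux I would write $(m\otimes v)^\epsilon=m^\epsilon\otimes v^\epsilon+R_\epsilon$ with $R_\epsilon:=(m\otimes v)^\epsilon-m^\epsilon\otimes v^\epsilon$; using $\diverg_x v^\epsilon=0$ one checks that $-v^\epsilon\cdot\diverg_x(m^\epsilon\otimes v^\epsilon)+\tfrac12|v^\epsilon|^2\diverg_x m^\epsilon$ collapses, after cancellation of the two copies of $m^\epsilon\cdot(v^\epsilon\cdot\nabla)v^\epsilon$, to $-\diverg_x\big((v^\epsilon\cdot m^\epsilon)v^\epsilon-\tfrac12|v^\epsilon|^2 m^\epsilon\big)$, which tends to $-\diverg_x(\tfrac12\rho|v|^2 v)$. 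The pressure contributes the clean flux $-\diverg_x(p^\epsilon v^\epsilon)$ since $\nabla_x p$ is linear and $v^\epsilon$ is divergence free. Collecting everything yields the mollified balance $\partial_t(\tfrac12\rho^\epsilon|v^\epsilon|^2)+\diverg_x\big((v^\epsilon\cdot m^\epsilon)v^\epsilon-\tfrac12|v^\epsilon|^2 m^\epsilon+p^\epsilon v^\epsilon\big)=-v^\epsilon\cdot\diverg_x R_\epsilon-v^\epsilon\cdot\partial_t r_\epsilon(\rho,v)$.

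Finally I would test this identity against $\phi\in C_c^\infty(\Omega\times(0,T))$ and let $\epsilon\to0$. The left-hand side converges to the asserted distributional energy balance: the density $\tfrac12\rho^\epsilon|v^\epsilon|^2=\tfrac12 m^\epsilon\cdot v^\epsilon$ and the flux, read through $m^\epsilon$, converge in $L^1_{\loc}$ by strong mollifier convergence in $L^q$, $L^p$, $L^{p^*}$ together with the borderline H\"older relations $\tfrac1q+\tfrac2p=1$ (kinetic terms) and $\tfrac1p+\tfrac1{p^*}=1$ (pressure term). The main obstacle, and the only place where $2\alpha+\beta>1$ enters, is the vanishing of the two error terms. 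After integrating by parts to move the derivatives onto $v^\epsilon$, they are controlled by $\|\nabla_x v^\epsilon\|_{L^p}\|R_\epsilon\|_{L^s}$ and $\|\partial_t v^\epsilon\|_{L^p}\|r_\epsilon(\rho,v)\|_{L^s}$ with $\tfrac1s=\tfrac1p+\tfrac1q$; the standard Besov estimates give $\|\nabla_x v^\epsilon\|_{L^p}+\|\partial_t v^\epsilon\|_{L^p}\lesssim\epsilon^{\alpha-1}$ and $\|R_\epsilon\|_{L^s}+\|r_\epsilon(\rho,v)\|_{L^s}\lesssim\epsilon^{\alpha+\beta}$, so each error is $\lesssim\epsilon^{2\alpha+\beta-1}\to0$, the H\"older pairing being exact precisely because $\tfrac2p+\tfrac1q=1$. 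Note that, unlike the $VMO$ setting of Theorems~\ref{localthm} and~\ref{globalthm}, here the decay is a genuine power of $\epsilon$, so no subsequence is needed. The term carrying $\partial_t v^\epsilon$ is the feature absent from the constant-density Onsager theorem: it arises from reconstructing the time derivative of the kinetic energy out of the mollified momentum and continuity equations, and it is tamed only because the accompanying commutator $r_\epsilon(\rho,v)$ supplies the compensating factor $\epsilon^{\alpha+\beta}$.
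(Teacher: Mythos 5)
Your overall strategy --- mollify, keep the momentum $m=\rho v$ as a single Besov object, and reduce everything to Constantin--E--Titi commutator estimates with the exponent bookkeeping $\epsilon^{\alpha+\beta}\cdot\epsilon^{\alpha-1}=\epsilon^{2\alpha+\beta-1}$ under $\tfrac2p+\tfrac1q=1$ --- is the right one and matches the proof in \cite{FGSW} (the present paper only recalls the statement). But there is one genuine gap, and it sits exactly at the step you describe as a ``cancellation of the two copies of $m^\epsilon\cdot(v^\epsilon\cdot\nabla)v^\epsilon$.'' Those two copies are $m^\epsilon_i v^\epsilon_j\partial_j v^\epsilon_i$ (from the convective term) and $m^\epsilon_j v^\epsilon_i\partial_j v^\epsilon_i$ (from $m^\epsilon\cdot\nabla\tfrac12|v^\epsilon|^2$, produced by the continuity equation), and they are \emph{not} equal: their difference is
\begin{equation*}
\bigl(m^\epsilon\otimes v^\epsilon-v^\epsilon\otimes m^\epsilon\bigr):\nabla v^\epsilon ,
\end{equation*}
the pairing of the antisymmetric part of $m^\epsilon\otimes v^\epsilon$ with $\nabla v^\epsilon$. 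In the homogeneous case $m=v$ this vanishes identically, which is presumably why you dropped it; here it survives. If you substitute $m^\epsilon=\rho^\epsilon v^\epsilon+r_\epsilon$ it becomes a genuinely trilinear expression $r_\epsilon\cdot\bigl[(\nabla v^\epsilon)-(\nabla v^\epsilon)^T\bigr]v^\epsilon$ (in $3d$, essentially $r_\epsilon\cdot(v^\epsilon\times\curl v^\epsilon)$), and the only H\"older triple available is $\|r_\epsilon\|_{L^s}\|v^\epsilon\|_{L^p}\|\nabla v^\epsilon\|_{L^p}$ with $\tfrac1s+\tfrac2p=\tfrac1p+\tfrac1q+\tfrac2p=1+\tfrac1p>1$, so the estimate does not close for any finite $p$. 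None of your two listed error terms contains this contribution, so your claimed mollified balance is simply not an identity.

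The missing idea is structural rather than analytic: since $m\otimes v=\rho\,v\otimes v$ is pointwise \emph{symmetric}, one has $(m\otimes v)^\epsilon-(v\otimes m)^\epsilon=0$, hence
\begin{equation*}
m^\epsilon\otimes v^\epsilon-v^\epsilon\otimes m^\epsilon=-\Bigl(R_\epsilon-R_\epsilon^T\Bigr),\qquad R_\epsilon:=(m\otimes v)^\epsilon-m^\epsilon\otimes v^\epsilon .
\end{equation*}
This converts the offending trilinear term back into a \emph{bilinear} commutator paired with $\nabla v^\epsilon$, i.e.\ $\|R_\epsilon\|_{L^s}\|\nabla v^\epsilon\|_{L^p}\lesssim\epsilon^{\alpha+\beta}\epsilon^{\alpha-1}$, which is admissible. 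This is the essential point where the hypothesis $\rho v\in B^{\beta,\infty}_q$ (and not merely $\rho\in B^{\beta,\infty}_q$) is used, and it is the main new ingredient of the inhomogeneous case compared to the constant-density Onsager argument. With this identity inserted, the rest of your sketch (the treatment of $v^\epsilon\cdot\partial_t r_\epsilon$ by moving $\partial_t$ onto $v^\epsilon$, the term $R_\epsilon:\nabla v^\epsilon$, the pressure term via $\tfrac1p+\tfrac1{p^*}=1$, and the $L^1_{\loc}$ convergence of the energy density and flux) is correct. A cosmetic remark: $\tfrac12\rho^\epsilon|v^\epsilon|^2\neq\tfrac12 m^\epsilon\cdot v^\epsilon$; they differ by $\tfrac12 r_\epsilon\cdot v^\epsilon$, which is harmless since it vanishes in $L^1$.
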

 Note that although this result is stated in the variables $(\rho, v,p)$, but there is an additional requirement that momentum $\rho v$ is an element of Besov space. 

Observe that this theorem can be extended to cases where the problem is considered in any open set $\Omega \subset \Rbb^d$ . Moreover under the condition
$$
\lim_{\varepsilon \rightarrow 0} \int_0^T\frac{1}{\epsilon}\int_{\frac{\epsilon}{4} \le d(x,\del\Omega) \le \frac{\epsilon}{2}}\left|\left(\rho \frac {|v|^2}2 + p\right) v(t,x)\cdot n(\sigma( x))\right|\, dxdt =0
$$ one has the global energy conservation for $0<t<T$.

 %The introduction of $\Omega$ is also more consistent with the next subsection}

 To conclude, taking into account the above discussion and Remark~\ref{r42}, we formulate the theorem, which follows from the general result presented in Section~\ref{localtoglobal}. 
 \begin{theorem}
 Let $(\rho,m,p)\in \underline{B}_{3,\textit{VMO}}^{1/3}(\Omega\times [0,T])\times \underline{B}_{3,\textit{VMO}}^{1/3}(\Omega\times [0,T])\times  L^3_{loc}(\Omega\times [0,T])$ be a solution to~\eqref{eq:inho_eul_div_2}.
 Moroever, let 
 $$
\lim_{\varepsilon \rightarrow 0} \int_0^T\frac{1}{\epsilon}\int_{\frac{\epsilon}{4} \le d(x,\del\Omega) \le \frac{\epsilon}{2}}\left|\left(
            \frac{|m|^2}{2\rho}
            +p
        \right)
    \frac{m}{\rho} \cdot  n(\sigma( x))\right|\, dxdt =0.
 $$
 Then the energy is globally conserved, i.e., 
\begin{equation}
\frac{d}{dt}\int_\Omega  \frac{|m|^2}{2\rho}dx=0
\end{equation}
in the sense of distributions.
 \end{theorem}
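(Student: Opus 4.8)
The plan is to deduce the statement from Theorem~\ref{globalthm}, applied to system~\eqref{eq:inho_eul_div_2} in the variables $U=(\rho,m,p)$ with $A(U)=(\rho,m,0)$, $F(U)=(m,\frac{m\otimes m}{\rho}+p\Ibb,\frac{m}{\rho})$, $\eta(U)=\frac{|m|^2}{2\rho}$, $q(U)=(\frac{|m|^2}{2\rho}+p)\frac{m}{\rho}$ and $B(U)=(-\frac{|m|^2}{2\rho^2},\frac{m}{\rho},p)$. Throughout I work on a bounded convex target $\mathcal{O}$ on which $\rho\ge\underline\rho>0$; this is what legitimises the change of variables leading to~\eqref{eq:inho_eul_div_2} and makes $G=(F,A)$ genuinely $C^2$ with bounded second derivatives. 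First I would verify the compatibility relations~\eqref{compat}: since $A$ is linear, $B\cdot D_UA$ equals $D_U\eta$ precisely when the first $d+1$ components of $B$ are as above, while matching $D_Uq_j$ with $B\cdot D_UF_j$ slot by slot forces the last component $B_{d+2}=p$ (the $\rho$-slot is what pins it down). This is a routine differentiation on which I would not dwell.

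Next I would check the structural hypotheses of Theorem~\ref{globalthm}, absorbing the terms that fail them through Remark~\ref{r42}. The only component of $B$ without linear growth is $B_1=-\frac{|m|^2}{2\rho^2}$; but the density row $G_1=(m,\rho)$ is affine, so by the third item of Remark~\ref{r42} it is enough that $B_1$ be locally Lipschitz on $\overline{\mathcal{O}}$, which holds since $\rho\ge\underline\rho$. The momentum multiplier $\frac{m}{\rho}$ then has linear growth, the cubic bound~\eqref{locest} on $\eta,q$ holds, and the second-derivative bounds on $A$ (affine) and on the nonlinear fluxes $\frac{m\otimes m}{\rho},\frac{m}{\rho}$ hold on the bounded set $\mathcal{O}$.

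The main obstacle is the pressure, assumed only in $L^3_{loc}$, which enters $G$ in two places with very different consequences. In the momentum flux it appears through the linear term $p\Ibb$, whose second $U$-derivative vanishes, so the commutator estimate~\eqref{basic2} does not see the regularity of $p$ there; this is exactly the situation covered by the second item of Remark~\ref{r42}. The genuinely new difficulty---absent in the homogeneous case of Section~\ref{incompEul}---is that here the incompressibility row carries the \emph{nonlinear} flux $v=\frac{m}{\rho}$, while the compatibility relations force its companion multiplier to be $B_{d+2}=p$. One therefore cannot discard the low regularity of $p$ merely by appealing to linearity of that row. Instead I would isolate this contribution, which after an integration by parts is of the essential form $\int[p]_\epsilon\,\diverg_x(v([U]_\epsilon))\,\psi$, and exploit the constraint: because $\diverg_x v(U)=0$, hence $\diverg_x[v(U)]_\epsilon=0$, it equals $\int[p]_\epsilon\,\diverg_x R_\epsilon\,\psi$ with the velocity commutator $R_\epsilon=v([U]_\epsilon)-[v(U)]_\epsilon$ controlled in $L^{3/2}$ by Lemma~\ref{commestimate} through the $\underline{B}^{1/3}_{3,VMO}$ regularity of $(\rho,m)$ alone. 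Using the mollified density equation $\diverg_x[m]_\epsilon=-\partial_t[\rho]_\epsilon$ and the lower bound on $\rho$, $\diverg_x(v([U]_\epsilon))$ can be rewritten in terms of $\partial_t[\rho]_\epsilon$ and $\nabla_x[\rho]_\epsilon$, so that the entire pressure contribution reduces to pairings of $p\in L^3_{loc}$ against derivatives of the mollified density and momentum.

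Establishing that these residual pairings vanish as $\epsilon\to0$ is, I expect, the hard part. A naive Hölder estimate bounds each pairing by $C\|p\|_{L^3}$ times the $L^3$-norm of $\nabla_x[\rho]_\epsilon$, which by Lemma~\ref{VBMOlemma} only grows like $\epsilon^{-2/3}$ and does not close at the critical scale. One must instead use the cancellations within the full energy balance (as in the proof of Theorem~\ref{inhomonsager}) together with the \emph{vanishing}---not merely bounded---character of the $VMO$ modulus, and the fact that $p\in L^3_{loc}$ is more than the $L^{3/2}_{loc}$ required by~\eqref{exponenthypo}; this is precisely the borderline case $2\alpha+\beta=1$ of the Besov trade-off. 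Once this term is shown to vanish, Theorem~\ref{globalthm} applies verbatim: its boundary hypothesis~\eqref{boundaryass} is exactly the assumed vanishing of the flux integral with $q=(\frac{|m|^2}{2\rho}+p)\frac{m}{\rho}$, and the conclusion $\frac{d}{dt}\int_\Omega\frac{|m|^2}{2\rho}\,dx=0$ in the sense of distributions follows at once.
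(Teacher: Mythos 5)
Your setup coincides with the paper's intended route: apply Theorem~\ref{globalthm} to \eqref{eq:inho_eul_div_2} with the stated $A,F,\eta,q,B$, absorb the superlinear component $B_1=-|m|^2/(2\rho^2)$ through the affinity of the density row via Remark~\ref{r42}, and use $\rho\ge\underline\rho>0$ for the linear growth of $m/\rho$; your verification of \eqref{compat}, including the observation that the $\rho$-slot of the momentum flux forces $B_{d+2}=p$, is correct. The problem is that your argument stops being a proof exactly at the term you yourself single out as decisive. That term,
\begin{equation*}
\int [p]_\epsilon\,\psi\,\diverg_x\Bigl(\tfrac{[m]_\epsilon}{[\rho]_\epsilon}\Bigr)dx\,dt
=-\int\Bigl(\tfrac{[m]_\epsilon}{[\rho]_\epsilon}-\bigl[\tfrac{m}{\rho}\bigr]_\epsilon\Bigr)\cdot\nabla_x([p]_\epsilon\psi)\,dx\,dt ,
\end{equation*}
is the unique place where the multiplier $B_{d+2}=p$ meets a genuinely nonlinear row of $G$. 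Lemma~\ref{commestimate} controls the commutator by $(\omega\epsilon)^{2/3}$ in $L^{3/2}$, while for $p$ merely in $L^3_{loc}$ one only has $\|\nabla_x[p]_\epsilon\|_{L^3}\lesssim\epsilon^{-1}\|p\|_{L^3}$, so the product is $O(\omega^{2/3}\epsilon^{-1/3})$; rewriting via the mollified continuity equation, as you propose, merely trades this for pairings of $[p]_\epsilon\in L^3$ against $\partial_t[\rho]_\epsilon$ and $\nabla_x[\rho]_\epsilon$, each of size $\epsilon^{-2/3}$ in $L^3$ by Lemma~\ref{VBMOlemma}, which is no better. Your proposed rescues do not work: there is no cancellation left to exploit (the pressure in the momentum flux cancels internally because it enters linearly, and the constraint-row contribution stands alone); the ``borderline case $2\alpha+\beta=1$'' is precisely what \eqref{exponenthypo} excludes in Theorem~\ref{inhomonsager}; and the surplus integrability $L^3$ versus $L^{3/2}$ buys no power of $\epsilon$, while the vanishing of $\omega(\epsilon)$ cannot compensate a residual $\epsilon^{-1/3}$. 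The term does close if $p\in\underline{B}^{1/3}_{3,VMO}$ (then $\|\nabla_x[p]_\epsilon\|_{L^3}\lesssim\omega^{1/3}\epsilon^{-2/3}$ and the product is $O(\omega)$), but not, by anything you write, under $p\in L^3_{loc}$.

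In fairness, the paper's own proof of this theorem is the single assertion that it ``follows from the general result presented in Section~\ref{localtoglobal}'' together with Remark~\ref{r42} and the preceding discussion, and that discussion addresses only the first two components of $B$. The second bullet of Remark~\ref{r42} requires $B$ to be independent of the low-regularity variable, which fails here since $B_{d+2}=p$, and, unlike the homogeneous case of Section~\ref{incompEul}, the row it multiplies, $(m/\rho,0)$, is not affine, so its mollification commutator does not vanish. You have therefore correctly located a step that the paper silently skips; but locating the obstruction is not the same as removing it, and your proposal leaves the theorem unproved at its crux.
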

  
\subsection{Compressible Euler system}\label{con}
We consider the compressible Euler equations in the following form
\begin{align}
 \label{eq:comp_eul}
    \begin{aligned}
        \partial_t \rho+ \diverg_{x}(\rho v) &= 0, \\
        \partial_t (\rho v) + \diverg_x(\rho v\otimes v +p(\rho)\Ibb) &= 0,
    \end{aligned}
%    \quad \mbox{in }  \Omega\times[0,T]
\end{align}
for an unknown vector field $v\colon \Omega\times[0,T] \to \R^n$
and scalar  $\rho\colon  \Omega\times[0,T]  \to \R$. The function $p\colon [0,\infty)\to\R$ is given.  Let $P$ be the so-called pressure potential given by
\begin{equation}
P(\rho)=\rho\int_1^\rho\frac{p(z)}{z^2} dz.
\end{equation}
%\begin{align}
% \label{eq:comp_eul_div}
%    \left.
%    \begin{aligned}
%        \partial_t \rho+ \diver_{x}(\rho\bu^T) &= 0,\\
%        \partial_t \bu + \diverg_x\left(\bu\otimes \bu+P(\rho)\BI\right)  &= 0.
%    \end{aligned}
%    \right.
%\end{align}
Let $(\rho,v)\in \underline{B}_{3,\textit{VMO}}^{1/3}(\Omega\times [0,T])\times \underline{B}_{3,\textit{VMO}}^{1/3}(\Omega\times [0,T])$
%L^3(0,T;B^{\alpha}_{3, \infty}(\Omega))\times L^3(0,T;B^{\alpha}_{3, \infty}(\Omega;\R^n))$
be a weak solution to \eqref{eq:comp_eul}.
%such that $\rho\in [\underline{\rho},\bar{\rho}]$ for some $0<\underline{\rho}<\bar{\rho}<\infty$ and $v\in B(0,R)$ for some $R>0$.
To get the conservation of the energy, we multiply \eqref{eq:comp_eul} with
\begin{equation*}
    { B}(\rho,v)=\left( P'(\rho)-\frac{1}{2}|v|^2,  v\right)
\end{equation*}
and obtain
\begin{equation}
 \label{eq:comp_cons}
    \partial_t\left(
        \frac{1}{2}\rho |v|^2 +  P(\rho)
              \right)
    +\diverg_x\left[
        \left(
            \frac{1}{2}\rho |v|^2
            +P(\rho)+p(\rho)
        \right)
    v\right]
    =0.
\end{equation}
In the variables $U=(\rho,v)$, in correspondence to the notation \eqref{FA}, we have
\begin{equation}
A(U)=(\rho, \rho v), \quad F(U)=(\rho v, \ \rho v\otimes v + p(\rho) \Ibb)
\end{equation}
and
\begin{equation}
\eta(U)=\frac{1}{2}|v|^2+P(\rho), \quad q(U)= \left(
            \frac{1}{2}\rho |v|^2
            +P(\rho)+p(\rho)
        \right)
    v.
\end{equation}
The entropy flux function $q(U)$ is in the form of a product of a scalar function and $v$, say $q(\rho, v)=\tilde q(\rho,v) v$.
Thus the condition $q(U)\cdot n=0$ on the boundary is equivalent to $v\cdot n=0$ on the boundary.

 If $\rho\ge\underline\rho>0$ the compressible Euler system can be rewritten with respect to the quantities $\rho$ and momentum $m = \rho v$ as follows
    \begin{align}
     \label{eq:comp_eul_div_2}
    \begin{aligned}
        \partial_t \rho+ \diverg_{x}m &= 0,\\
        \partial_t m + \diverg_x\left(\frac{m\otimes m}{\rho}+p(\rho)\Ibb\right)  &= 0,
    \end{aligned}
  %  \right\}\quad \mbox{in } [0,T]\times\Omega.
\end{align}
A suitable choice of  ${ B}$ is then \begin{equation}
    { B}(\rho,m) = \left( P'(\rho)+\frac{|m|^2}{2\rho^2},\frac{m}{\rho}\right),
\end{equation}
%where
%\begin{equation*}
%P(\rho)=\rho\int_1^\rho\frac{p(r)}{r^2}dr,
%\end{equation*}
which leads to the companion law
\begin{equation}
    \label{eq:comp_cons_2}
    \partial_t\left(
        \frac{|m|^2}{2\rho}  + P(\rho)
              \right)
    +\diverg_x\left[
        \left(
            \frac{|m|^2}{2\rho}
            + P(\rho)+p(\rho)
        \right)
    \frac{m}{\rho}\right]
    =0.
\end{equation}
%As $A(U)= id$, we can apply Remark
%\ref{r42}. As its consequence,
%a weak solution
%\begin{equation}
%(\rho,m)\in L^3(0,T;\underline{B}_{3,\textit{VMO}}^{1/3}(\Omega))\times L^3(0,T;\underline{B}_{3,\textit{VMO}}^{1/3}(\Omega))
%\label{bdquest}
%\end{equation}
%satisfying the slip boundary condition in the sense of~\eqref{boundaryass}, and such that $\rho\in [\underline{\rho},\bar{\rho}]$ for some $0<\underline{\rho}<\bar{\rho}<\infty$,
%is also a weak solution to \eqref{eq:comp_cons_2}, and satisfies the conservation of total energy.
The flux function $q(U)$ is in the form of a product of a scalar function and $m/\rho$, $q(\rho, m)=\tilde q(\rho,m) \frac{m}{\rho}.$
Thus the condition $q(U)\cdot n=0$ on the boundary is equivalent to $m\cdot n=0$ on the boundary.

%{\color{blue}
%  {Here also the natural condition would be:
%$$
%\lim_{\varepsilon \rightarrow 0} \int_0^T\frac{1}{\epsilon}\int_{\frac{\epsilon}{4} \le d(x,\del\Omega) \le \frac{\epsilon}{2}}\left|\left(
%            \frac{|m|^2}{2\rho}
%            + P(\rho)+p(\rho)
%        \right)
%    \frac{m}{\rho} \cdot  n(\sigma( x))\right|\, dxdt =0
% $$
% does such condition follows from the condition at the boundary plus (\ref{bdquest})}} 
% 

 To conclude, taking into account the above discussion and Remark~\ref{r42}, we formulate the result in a bounded domain $\Omega$, which follows from the general result presented in Section~\ref{localtoglobal}. 
 \begin{theorem}
 Let $(\rho,m)\in L^3(0,T;\underline{B}_{3,\textit{VMO}}^{1/3}(\Omega))\times L^3(0,T;\underline{B}_{3,\textit{VMO}}^{1/3}(\Omega))$ be a solution to~\eqref{eq:comp_eul_div_2}.
 Moroever, let 
 $$
\lim_{\varepsilon \rightarrow 0} \int_0^T\frac{1}{\epsilon}\int_{\frac{\epsilon}{4} \le d(x,\del\Omega) \le \frac{\epsilon}{2}}\left|\left(
            \frac{|m|^2}{2\rho}
            + P(\rho)+p(\rho)
        \right)
    \frac{m}{\rho} \cdot  n(\sigma( x))\right|\, dxdt =0
 $$
 Then the energy is globally conserved, i.e., 
\begin{equation}
\frac{d}{dt}\int_\Omega \left(
        \frac{|m|^2}{2\rho}  + P(\rho)
              \right)dx=0
\end{equation}
in the sense of distributions.
 \end{theorem}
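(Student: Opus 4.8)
The plan is to obtain this statement as a direct application of Theorem~\ref{globalthm} in the variables $U=(\rho,m)$, using the structural reductions collected in Remark~\ref{r42} to circumvent the growth hypotheses that the natural multiplier $B$ violates. First I would record the identifications dictated by the system~\eqref{eq:comp_eul_div_2}: the temporal flux $A(U)=(\rho,m)$, the spatial flux $F(U)=(m,\frac{m\otimes m}{\rho}+p(\rho)\Ibb)$, the entropy $\eta(U)=\frac{|m|^2}{2\rho}+P(\rho)$, the entropy flux $q(U)=\bigl(\frac{|m|^2}{2\rho}+P(\rho)+p(\rho)\bigr)\frac{m}{\rho}$, and the multiplier $B(\rho,m)=\bigl(B_0(\rho,m),\frac{m}{\rho}\bigr)$ whose first component is smooth on $\{\rho\ge\underline\rho\}$ but grows quadratically in $m$. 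The compatibility relations~\eqref{compat} are precisely the algebraic identities underlying the derivation of the companion law~\eqref{eq:comp_cons_2}, which rest on the pressure-potential identity $\rho P'(\rho)-P(\rho)=p(\rho)$; I would simply invoke that computation rather than redo it.

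The main obstacle is that $B$ fails the linear-growth hypothesis of Theorem~\ref{globalthm}, since $B_0$ grows quadratically in $m$. The decisive observation is that this bad component multiplies only the \emph{mass} row of $G$, whose entries $\rho$ and $m$ are \emph{linear} in $U=(\rho,m)$. By the third item of Remark~\ref{r42}, an affine row of $G$ contributes no commutator error to the mollification argument---only $D_U^2G$ enters the estimate~\eqref{basic2}, and it vanishes on that row---so the component $B_0$ need only be locally Lipschitz, which it is on $\{\rho\ge\underline\rho\}$, and its quadratic growth is immaterial. For the momentum rows the relevant component is $\frac{m}{\rho}$, which grows linearly precisely because $\rho\ge\underline\rho>0$. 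One last point to settle in the setup is the choice of the convex set $\mathcal{O}$: since $\partial_\rho^2 F$ contains the term $\frac{m\otimes m}{\rho^3}$, the second-derivative bounds of Theorem~\ref{globalthm} force $\mathcal{O}$ to be bounded and bounded away from $\{\rho=0\}$, so I would take $\mathcal{O}$ to be a convex set containing the range of $(\rho,m)$ with $\underline\rho\le\rho$, which is where the essential boundedness of the solution enters.

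It then remains to verify the ancillary hypotheses. The cubic bound~\eqref{locest} on $\eta$ and $q$ follows from $\rho\ge\underline\rho$ together with the smoothness of $p$ and $P$ on the admissible range of $\rho$. The regularity requirement is relaxed from the full space-time class $\underline{B}^{1/3}_{3,VMO}$ to $L^3(0,T;\underline{B}^{1/3}_{3,VMO}(\Omega))$ by the first item of Remark~\ref{r42}: because $A(U)=(\rho,m)=U$ is the identity, the temporal direction of $G$ is linear, so VMO regularity in the spatial variable alone suffices, which is exactly the assumed class of $(\rho,m)$. Finally, the imposed boundary limit is verbatim the hypothesis~\eqref{boundaryass} of Theorem~\ref{globalthm} for the present $q$. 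With all hypotheses in force, Theorem~\ref{globalthm} yields $\frac{d}{dt}\int_\Omega\bigl(\frac{|m|^2}{2\rho}+P(\rho)\bigr)dx=0$ in the sense of distributions, which is the claim.
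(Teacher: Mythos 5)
Your proposal is correct and follows exactly the route the paper intends: the theorem is obtained by specializing Theorem~\ref{globalthm} to $U=(\rho,m)$ and invoking Remark~\ref{r42} to excuse the quadratic growth of the first component of $B$ (since the mass row of $G$ is affine) and to reduce the space-time Besov condition to $L^3(0,T;\underline{B}^{1/3}_{3,\textit{VMO}}(\Omega))$ (since $A=\operatorname{id}$). Your explicit observation that the $C^2$ bounds on $F$ force $\mathcal{O}$ to be bounded and bounded away from vacuum is a point the paper leaves implicit in its standing assumption $\rho\ge\underline\rho>0$, and is a welcome clarification rather than a deviation.
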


We recall in detail the result from~\cite{FGSW}, as again, similar as in the case of the inhomogeneous incompressible Euler system, the particular form of the function $A$ allows for the interplay between the Besov regularity of particular terms, i.e., the exponents $\alpha$ and $\beta$. This result, too, was only stated for the system on the torus, i.e. $\Omega=\Tbb^d$. 

\begin{theorem}[Th. 4.1 from \cite{FGSW}]\label{compressibleonsager}
Let $\rho$, $v$ be a solution of~\eqref{eq:comp_eul} in the sense of distributions. Assume
\begin{equation*}
v\in B^{\alpha}_{3,\infty}(\Omega\times(0,T)),\hspace{0.3cm}\rho, \rho v\in B^{\beta}_{3,\infty}(\Omega\times(0,T)),\hspace{0.3cm}
0 \leq \underline{\rho} \leq \rho \leq \overline{\rho} \ \mbox{a.a. in } \Omega\times(0,T),
\end{equation*}
for some constants $\underline{\vr}$, $\overline{\vr}$, and
$0\leq\alpha,\beta\leq1$ such that
\begin{equation}\label{alphabeta}
\beta > \max \left\{ 1 - 2 \alpha; \frac{1 - \alpha}{2} \right\}.
\end{equation}
Assume further that $p \in C^2[\underline{\vr}, \overline{\vr}]$, and, in addition
\begin{equation}\label{pressure}
p'(0) = 0 \ \mbox{as soon as}\ \underline{\vr} = 0.
\end{equation}
Then the energy is locally conserved, i.e.
\begin{equation*}
\partial_t\left(\frac{1}{2}\rho|v|^2+P(\rho)\right)+\diverg\left[\left(\frac{1}{2}\rho|v|^2+p(\rho)+P(\rho)\right)v\right]=0
\end{equation*}
in the sense of distributions on $\Omega\times(0,T)$.
\end{theorem}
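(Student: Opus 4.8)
The plan is to prove the statement by the Constantin--E--Titi mollification scheme, adapted to the variable-density setting exactly as in~\cite{FGSW}; the point of carrying two separate Besov exponents, $\alpha$ for $v$ and $\beta$ for $\rho$ and $\rho v$, is to balance two genuinely different error terms. First I would mollify both equations of~\eqref{eq:comp_eul} in space-time against $\eta_\epsilon$, obtaining
\[
\del_t\rho_\epsilon+\diverg_x(\rho v)_\epsilon=0,\qquad \del_t(\rho v)_\epsilon+\diverg_x(\rho v\otimes v)_\epsilon+\nabla_x p(\rho)_\epsilon=0,
\]
which now hold \emph{classically} since the mollified fields are smooth. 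I would then form the energy balance for the regularized system, multiplying the mollified momentum equation by a mollified velocity $v_\epsilon$ (for instance $(\rho v)_\epsilon/\rho_\epsilon$, away from vacuum) and the mollified continuity equation by $P'(\rho_\epsilon)-\tfrac12|v_\epsilon|^2$, adding, and testing against a temporal cutoff $\chi(t)$. For smooth fields the chain rule is exact, so the left-hand side reproduces $\del_t\eta(U)+\diverg_x q(U)$ up to a collection of \emph{commutator defects} that measure the failure of mollification to commute with the nonlinearities.

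The heart of the matter is to show these defects vanish in $L^1_{\loc}$ as $\epsilon\to0$, and they split into two families with different scaling. The kinetic-flux defect originates from $\rho v\otimes v$ and, after one integration by parts, is paired with $\nabla_x v_\epsilon$; using the Constantin--E--Titi commutator identity for the bilinear product of $\rho v\in B^\beta_{3,\infty}$ with $v\in B^\alpha_{3,\infty}$ one controls it by $\epsilon^{\alpha+\beta}$, while $\|\nabla_x v_\epsilon\|_{L^3}\lesssim\epsilon^{\alpha-1}$, so that the product scales like $\epsilon^{2\alpha+\beta-1}$ and vanishes precisely under $2\alpha+\beta>1$, i.e.\ $\beta>1-2\alpha$. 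The pressure defect originates from $p(\rho)$ and $P(\rho)$: since $p\in C^2$ and $\rho\in B^\beta_{3,\infty}$, the \emph{composition} commutator $p(\rho)_\epsilon-p(\rho_\epsilon)$ enjoys the cancellation of its first-order part and is bounded by $\epsilon^{2\beta}$ in $L^{3/2}$ --- this is exactly the mechanism isolated in Lemma~\ref{commestimate} and estimate~\eqref{basic2}, where only $D^2_UG$ enters --- so that, paired again with $\nabla_x v_\epsilon$, it scales like $\epsilon^{\alpha+2\beta-1}$ and vanishes precisely under $\alpha+2\beta>1$, i.e.\ $\beta>\tfrac{1-\alpha}{2}$. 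Together these are exactly hypothesis~\eqref{alphabeta}.

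I expect the main obstacle to be the degenerate (vacuum) case $\underline{\vr}=0$, where $v=m/\rho$ and $P'(\rho)$ need not be bounded and the multiplier $v_\epsilon=(\rho v)_\epsilon/\rho_\epsilon$ is not obviously controlled; this is the delicate point flagged in the introduction and treated carefully in~\cite{FGSW, AkWi, AkDeSkWi}, and it lies outside the reach of the uniform $C^2$, bounded-domain framework of Theorem~\ref{localthm}. The structural assumption~\eqref{pressure}, namely $p'(0)=0$, is what rescues this case: it forces $P$ and the pressure flux to vanish fast enough at $\rho=0$ that the products $P(\rho)v$, $p(\rho)v$ and the kinetic flux $\tfrac12\rho|v|^2v$ stay integrable and the commutator bounds survive uniformly up to the vacuum set. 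A secondary nuisance is the bookkeeping of the split regularity, since the commutator estimate must be invoked with mixed exponents (one $B^\alpha$ and one or two $B^\beta$ factors) rather than in the uniform $\underline{B}_{3,\textit{VMO}}^{1/3}$ scale. Once both defect families are shown to be $o(1)$, passing to the limit $\epsilon\to0$ in the regularized balance yields the local energy identity~\eqref{eq:comp_cons}, as claimed.
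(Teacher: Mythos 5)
This theorem is only \emph{quoted} in the present paper (as Th.~4.1 of \cite{FGSW}); no proof is given here, so there is nothing to compare line by line. Your sketch is nonetheless a faithful reconstruction of the argument actually used in \cite{FGSW}: mollify, test with the entropy multiplier $B$, and balance two families of commutator errors, with $2\alpha+\beta>1$ coming from the trilinear kinetic terms (one $B^\beta$ factor, two $B^\alpha$ factors, one derivative costing $\epsilon^{-1}$) and $\alpha+2\beta>1$ from the second-order composition commutators for $p(\rho)$ and $P(\rho)$ paired with $\nabla_x v_\epsilon$ --- exactly the mechanism of Lemma~\ref{commestimate}. The one imprecision is your reading of hypothesis~\eqref{pressure}: its role is not the integrability of the fluxes near vacuum (note $v$ is hypothesized directly in $B^{\alpha}_{3,\infty}$, not recovered as $m/\rho$), but the boundedness of $P''(\rho)=p'(\rho)/\rho$ on $[0,\overline{\vr}]$, which is precisely what the $C^2$-composition commutator estimate requires for $P$ and for the multiplier component $P'(\rho)$ when $\underline{\vr}=0$.
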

%{\color{blue}{Here also some comment should be welcomed with a theorem concerning the global conservation of energy in a domain with boundary.
%Moreover in all the above sections the introduction of the $(\alpha,\beta)$ may deserve some self contained comment. How it affects the hypothesis how much weaker they are compared with the classical and sufficient condition $\alpha=\beta=\frac13$}}

In $\alpha<\frac{1}{3}$, then  in case of incompressible Euler system we know there would exist $C^\alpha$ solutions that do not conserve energy. 
The above theorem indicates that  in case of compressible model, if together with the information $\alpha<\frac{1}{3}$ we know that the density $\rho$ is fine enough to provide that the product $\rho v$ is in a better space  (i.e. with an exponent $\beta$ sufficiently high), then the energy is conserved even for such low regular velocity fields.  

Similarly to the incompressible case elaborated in~\cite{BTW18}, a possible application concerns the Navier-Stokes-to-Euler limit, as viscosity tends to zero: A vanishing viscosity sequence of solutions to the compressible Navier-Stokes equations that \emph{uniformly} satisfies the interior Besov condition and the normal condition~\eqref{boundaryass} near the boundary will converge to a solution of the compressible Euler equations. This is particularly important as these requirements are consistent with the possible formation of a boundary layer, which affects only the regularity of the \emph{tangential} velocity component.

\subsection{Polyconvex elasticity}\label{polycon}

In this section we first consider a quasi-linear wave equation that may be interpreted as a model of nonlinear elastodynamics, when we understand $y\colon \Omega\times{\R}^+ \to{\R}^3$ as a displacement vector
\begin{equation}
\label{mainI}
\frac{\partial^2 y}{\partial t^2}=\diverg_x S(\nabla y).
\end{equation}
In the above equation $S$ is a gradient of some function ${\mathcal G}:{\mathbb M}^{3\times 3} \to [0,\infty)$.
%Since in physical applications this assumptions is hard to obtain, we present more realistic model below. However, using first the simpler form of the equation provides  better readability.
We rewrite the equation as a system, introducing the notation $v_i=\partial_ty_i$ and $\Fbb_{i j}=\frac{\partial y_i}{\partial x_j}$. Then
$U=(v,\Fbb)$ solves the system
\begin{equation}\label{poly}
\begin{aligned}
\frac{\partial v_i}{\partial t}&=\frac{\partial}{\partial x_j}\left(\frac{\partial {\mathcal G}}{\partial \Fbb_{i j}}\right),\\
\frac{\partial \Fbb_{i j}}{\partial t}&=\frac{\partial v_i}{\partial x_j}.
\end{aligned}
\end{equation}
With $A(U)\equiv id$ and $F(U)=\left(\frac{\partial {\mathcal G}}{\partial \Fbb_{i j}}, v\right)$ we have an entropy
$\eta(U)=\frac{1}{2}|v|^2+{\mathcal G}(\Fbb)$ and an entropy flux $q_j(U)=v_i\frac{\partial {\mathcal G}(\Fbb)}{\partial \Fbb_{i j}}$.
 Then the suitable choice of function $B$ is $B=(v, \frac{\partial {\mathcal G}(\Fbb)}{\partial \Fbb_{i j}})$. A typical conditions that are assumed on ${\mathcal G}$ are the following (see e.g. Section 2.2 in \cite{DST}):
\begin{equation}
{\mathcal G}\in C^3, \ |D^3{\mathcal G}(F)|\le M \mbox{ for some } M>0,
\end{equation}
\begin{equation}
{\mathcal G}(F)=g_0(F)+\frac{1}{2}|F|^2\ \mbox{where }\lim_{|F|\to\infty}\frac{g_0(F)}{1+|F|^2}=0
\end{equation}
and 
\begin{equation}
\lim_{|F|\to\infty}\frac{\frac{\partial {\mathcal G(F)}}{\partial\Fbb_{i j}}}{1+|F|^2}=0.
\end{equation}

Then again $B$ may not satisfy the requirement of linear growth, however the problem does not arise here, as the corresponding  row of the flux is linear. Under these assumptions entropy and entropy flux satisfy condition~\eqref{locest}.

One of the  natural boundary conditions $S n=0$ on $\partial \Omega$  (i.e.$\frac{\partial {\mathcal G}}{\partial \Fbb_{i j}}$ vanishes in the normal direction), so-called zero traction boundary condition,  implies that $q(U)\cdot n=0$ on~$\partial \Omega$. Another boundary conditions that is often considered is the Dirichlet boundary condition $v=0$ on $\partial \Omega$. In that case again $q(U)\cdot n=0$ on~$\partial \Omega$. 

Taking into account Remark~\ref{extension} and the above discussion we are ready to state the result in a bounded domain $\Omega$, which can be proved using the general result of Section~\ref{localtoglobal}. 
\begin{theorem}
Let $(v, \Fbb) \in
L^3(0,T;\underline{B}_{3,\textit{VMO}}^{1/3}(\Omega))\times L^3(0,T;\underline{B}_{3,\textit{VMO}}^{1/3}(\Omega))$ be a solution to \eqref{poly}. Moroever, let 
 $$
\lim_{\varepsilon \rightarrow 0} \int_0^T\frac{1}{\epsilon}\int_{\frac{\epsilon}{4} \le d(x,\del\Omega) \le \frac{\epsilon}{2}}\left|
            v_i\frac{\partial {\mathcal G}(\Fbb)}{\partial \Fbb_{i j}} \cdot  n(\sigma( x))\right|\, dxdt =0.
 $$
Then the energy is globally conserved, i.e., 
\begin{equation}
\frac{d}{dt}\int_\Omega \left(\frac{1}{2}|v|^2+{\mathcal G}(\Fbb) \right)dx=0
\end{equation}
in the sense of distributions.

\end{theorem}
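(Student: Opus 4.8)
The plan is to derive this theorem as a direct specialization of Theorem~\ref{globalthm} to the elastodynamics system \eqref{poly}, once the growth hypotheses on $B$ are dispensed with by means of Remark~\ref{r42}. I first record the identifications dictated by the form \eqref{FA}: since \eqref{poly} reads $\partial_t v_i=\partial_{x_j}(\partial\mathcal{G}/\partial\Fbb_{ij})$ and $\partial_t\Fbb_{ij}=\partial_{x_j}v_i$, the conserved density is $A(U)=U=(v,\Fbb)$ (the identity), while the spatial flux is $F(U)=(\partial\mathcal{G}/\partial\Fbb,\,v)$. For the companion law the entropy is $\eta(U)=\tfrac12|v|^2+\mathcal{G}(\Fbb)$, the entropy flux is $q_j(U)=v_i\,\partial\mathcal{G}(\Fbb)/\partial\Fbb_{ij}$, and the multiplier is $B(U)=(v,\,\partial\mathcal{G}(\Fbb)/\partial\Fbb)$.

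Next I would verify the structural hypotheses of Theorem~\ref{globalthm}. The compatibility relations \eqref{compat} hold by construction: since $A=\mathrm{id}$ we have $D_U\eta=B$, which is immediate from $\partial\eta/\partial v_i=v_i$ and $\partial\eta/\partial\Fbb_{ij}=\partial\mathcal{G}/\partial\Fbb_{ij}$, while a direct differentiation of $q_j$, using the symmetry of the second derivatives of $\mathcal{G}$, confirms $D_Uq_j=B\,D_UF_j$. The boundedness of the second derivatives of $A$ and $F$ reduces to $|D^3\mathcal{G}|\le M$, which is assumed, since $A$ is affine and the only nonlinearity in $F$ is $\partial\mathcal{G}/\partial\Fbb$. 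Finally, the cubic bound \eqref{locest} follows from the quadratic growth of $\mathcal{G}$ (coming from $\mathcal{G}(F)=g_0(F)+\tfrac12|F|^2$ with $g_0(F)/(1+|F|^2)\to0$) and from the assumed vanishing of $(\partial\mathcal{G}/\partial\Fbb_{ij})/(1+|F|^2)$ at infinity, so that $|q(U)|\lesssim|v|\,(1+|\Fbb|^2)\lesssim 1+|U|^3$ by Young's inequality.

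The main obstacle is that $B$ fails the linear-growth requirement $|B(V)|\le C(1+|V|)$ of Theorem~\ref{globalthm}: its block $B_2=\partial\mathcal{G}/\partial\Fbb$ grows quadratically in $\Fbb$. The resolution is exactly the third bullet of Remark~\ref{r42}: the offending components $B_2$ are paired with the rows of $G=(F,A)$ governing the $\Fbb_{ij}$-equations, and those rows are affine (the conserved density $\Fbb_{ij}$ and the flux $v_i$ are both linear in $U$), so the mollification produces no commutator error in these rows and it suffices that $B_2$ be merely locally Lipschitz, which holds since $\mathcal{G}\in C^3$. For the remaining rows, governing the $v_i$-equations, the matching component of $B$ is $v$ itself, which does have linear growth, so the argument of Theorem~\ref{localthm} applies there unchanged. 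In the same spirit, since $A=\mathrm{id}$ is affine and $\mathcal{X}=\Omega\times(0,T)$ factorizes, the first bullet of Remark~\ref{r42} lets me replace the full space-time Besov hypothesis of Theorem~\ref{globalthm} by the stated weaker assumption $U\in L^3(0,T;\underline{B}^{1/3}_{3,\textit{VMO}}(\Omega))$.

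With these preparations the conclusion is immediate. The interior condition on $U$ yields, via Theorem~\ref{localthm} as modified above, that the companion law $\partial_t\eta(U)+\diverg_x q(U)=0$ holds in $\mathcal{D}'(\Omega\times(0,T))$. The boundary hypothesis \eqref{boundaryass} is precisely the stated vanishing of the normal entropy flux $q(U)\cdot n=v_i\,(\partial\mathcal{G}/\partial\Fbb_{ij})\,n_j$ in the boundary shell $\tfrac{\epsilon}{4}\le d(x,\partial\Omega)\le\tfrac{\epsilon}{2}$. Feeding both into the localization-to-globalization argument of Theorem~\ref{globalthm} — testing against $\psi(x,t)=\chi(t)\phi^\epsilon(x)$ and letting $\epsilon\to0$ along a suitable subsequence — the interior term converges to $\int_0^T\int_\Omega\chi'(t)\eta(U)\,dx\,dt$ while the boundary term vanishes, giving $\tfrac{d}{dt}\int_\Omega\eta(U)\,dx=0$ in the sense of distributions, as claimed.
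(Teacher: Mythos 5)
Your proposal is correct and follows essentially the same route as the paper: the paper proves this theorem precisely by specializing Theorem~\ref{globalthm} to the identifications $A=\operatorname{id}$, $F(U)=(\partial\mathcal{G}/\partial\Fbb,\,v)$, $B=(v,\,\partial\mathcal{G}/\partial\Fbb)$, invoking Remark~\ref{r42} to excuse the quadratic growth of the $\Fbb$-block of $B$ (paired with affine rows of $G$) and to reduce the space-time Besov hypothesis to the $L^3$-in-time spatial one. Your write-up in fact supplies more of the verification details (compatibility relations, the cubic bound~\eqref{locest}) than the paper's own brief discussion does.
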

\begin{remark}
For system~\eqref{poly}, in the spirit of earlier examples, one could formulate conditions allowing to distinguish among different  regularity requirements for $v$ and $\Fbb$. Since the nonlinearity only appears in $\Fbb$, then 
to provide the conservation of entropy we could assume that $(v, \Fbb) \in
L^3(0,T;\underline{B}_{3,\textit{VMO}}^{\alpha}(\Omega))\times L^3(0,T;\underline{B}_{3,\textit{VMO}}^{\beta}(\Omega))$ with $\beta=\frac{1-\alpha}{2}$. 
\end{remark}

%Taking into account Remark~\ref{extension}, under the assumption that $(v, \Fbb) \in
%L^3(0,T;\underline{B}_{3,\textit{VMO}}^{1/3}(\Omega))\times L^3(0,T;\underline{B}_{3,\textit{VMO}}^{1/3}(\Omega))$, Theorem~\ref{globalthm} holds.

In the case of elastodynamics  we regard  $S$ as the Piola-Kirchoff stress tensor
obtained as the gradient of a stored energy function,
$S = \frac{\partial W}{\partial {\mathbb F}}$.  A natural assumption is that
$W$ is polyconvex, that is  $W({\mathbb F}) = {\mathcal G} ( \Phi({\mathbb F}))$ where
${\mathcal G}:{\mathbb M}^{3\times 3}\times{\mathbb M}^{3\times 3}\times \R \to [0,\infty)$
is a strictly convex function and $\Phi({\mathbb F}) = ({\mathbb F} ,\cof {\mathbb F}, \det {\mathbb F})\in
{\mathbb M}^{3\times 3}\times{\mathbb M}^{3\times 3}\times \R$
stands for the vector of null-Lagrangians: ${\mathbb F}$, the cofactor matrix $\cof {\mathbb F}$
and the determinant $\det {\mathbb F}$.

%\cite{DeStTz} and \cite{DeStTz2}
The system can be embedded into the following symmetrizable hyperbolic system in a new dependent variable $\Xi=({\mathbb F},Z,w)$,  see e.g.~\cite{Dafermos1985} and~\cite{DST}, taking values in
${\mathbb M}^{3\times 3}\times {\mathbb M}^{3\times 3}\times\R$:
\begin{equation}
\begin{aligned}
\frac{\partial v_i}{\partial t}&=\frac{\partial}{\partial x_j}\left(\frac{\partial {\mathcal G}}{\partial\Xi^A}(\Xi)\frac{\partial\Phi^A}{\partial
{\mathbb F}_{i j}}({\mathbb F})\right),\\
\frac{\partial\Xi^A}{\partial t}&=\frac{\partial}{\partial
x_j}\left(\frac{\partial\Phi^A}{\partial
{\mathbb F}_{i j}}({\mathbb F})v_i\right),
\end{aligned}
\end{equation}
and hence  for $U=(v,{\mathbb F},Z,w)$ we have
\begin{equation}
A(U)=id, \quad F(U)=\left(\frac{\partial {\mathcal G}}{\partial\Xi^A}(\Xi)\frac{\partial\Phi^A}{\partial
{\mathbb F}_{i j}}({\mathbb F}), \frac{\partial\Phi^A}{\partial
{\mathbb F}_{i j}}({\mathbb F})v_i\right).
\end{equation}
This system admits the following entropy-entropy flux pair
\begin{equation}
\begin{aligned}
\eta(v,{\mathbb F},Z,w)&=\frac{1}{2}|v|^2+{\mathcal G}({\mathbb F},Z,w),\\
q_j(v,{\mathbb F},Z,w)&=v_i\,\frac{\partial {\mathcal G}}{\partial\Xi^A}(\Xi)\frac{\partial\Phi^A}{\partial {\mathbb F}_{i j}}({\mathbb F}).
\end{aligned}
\end{equation}
Of course in this situation the conclusions follow from the ones stated in the general case described at first.
%We observe it by taking
%\begin{equation}
%B(v,{\mathbb F},Z,w)=\left(\frac{\partial G}{\partial\Xi^A}(\Xi)\frac{\partial\Phi^A}{\partial
%{\mathbb F}_{\alpha}}({\mathbb F}), v\right).
%\end{equation}

%{\color{blue}{Would it be possible to write a theorem summarizing what is proven. Is it natural for this problem to consider the problem in a domain with boundary . Of course the condition $1/3$ is sufficient. Is there in this case some refined condition with $(\alpha,\beta)$}}
\subsection{Incompressible magnetohydrodynamics}
Let us consider the system
\begin{align}
    \label{eq:MHD}
  \left.
  \begin{aligned}
           \partial_t v + \Div_x(v\otimes  v - h\otimes h) + \nabla_x (p+\frac{1}{2}|h|^2) &= 0,
            \\
            \partial_t h + \Div_x(v\otimes h-h \otimes v)  &= 0,\\
              \Div_xv &= 0,
            \\
            \Div_xh&= 0,
           \end{aligned}
           \right.
%   %\quad &\mbox{in } \CX
\end{align}
for unknown vector functions $v\colon \Omega\times[0,T] \to \R^n$ and $h\colon \Omega\times[0,T] \to \R^n$ and an unknown scalar function $p\colon \Omega\times[0,T]\to \R$. It is sufficient to require that $\Div_x h$ is equal to zero at the initial time, as this information is then transported and thus we may reduce the system to $2n+1$ equations.  The system  describes the motion of an ideal electrically conducting fluid, see e.g.~\cite[Chapter VIII]{landau}.

Here $U=(v,h,p)$, $A(U)=(v,h,0)$,
and
$$F(v,h) = \left(v\otimes v-h\otimes h+(p+\frac{1}{2}|h|^2)\Ibb,v\otimes h-h \otimes v,v\right).$$
The entropy is given as $\eta = \frac12 (|v|^2 + |h|^2)$ %and $q = \nabla \eta = u = (\vu,b)^T$.
and the entropy fluxes are $q= \frac 12(|v|^2 + |h|^2)v - (v\cdot h) h$. The possible choice of 
the function $B$ is the following $B=(v,h, p-\frac 12|v|^2)$. 

Assuming $v\cdot n=0$ and $h\cdot n=0$ on the boundary (see e.g. \cite{FNS2014}) provides that $q(U)\cdot n=0$ on the boundary.

Using again Remark~\ref{extension} we can state the result in a bounded domain $\Omega$.
 \begin{theorem}
 Let $(v,h,p)\in L^3(0,T;\underline{B}_{3,\textit{VMO}}^{1/3}(\Omega))\times L^3(0,T;\underline{B}_{3,\textit{VMO}}^{1/3}(\Omega))\times L^3((0,T)\times\Omega)$ be a solution to~\eqref{eq:MHD}.
 Moroever, let 
 $$
\lim_{\varepsilon \rightarrow 0} \int_0^T\frac{1}{\epsilon}\int_{\frac{\epsilon}{4} \le d(x,\del\Omega) \le \frac{\epsilon}{2}}\left|\left(
            \frac 12(|v|^2 + |h|^2)v - (v\cdot h) h
        \right)
    \cdot  n(\sigma( x))\right|\, dxdt =0.
 $$
 Then the energy is globally conserved, i.e., 
\begin{equation}
\frac{d}{dt}\int_\Omega \frac12 (|v|^2 + |h|^2)dx=0
\end{equation}
in the sense of distributions.
 \end{theorem}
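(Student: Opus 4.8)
The plan is to deduce the statement from the general Theorem~\ref{globalthm}, verifying its structural hypotheses for the MHD system \eqref{eq:MHD} and circumventing the growth of $B$ exactly as was done for the incompressible Euler system in Section~\ref{incompEul}. First I would record the regularity and growth of the data. The state space is all of $\R^{2n+1}$ in the variables $(v,h,p)$, which is convex; the map $A(U)=(v,h,0)$ is affine, while $F$ is quadratic in $(v,h)$ and affine in $p$, so both $A$ and $F$ lie in $C^2(\overline{\mathcal O})$ with constant (hence bounded) second derivatives. Since $\eta=\tfrac12(|v|^2+|h|^2)$ is quadratic and $q=\tfrac12(|v|^2+|h|^2)v-(v\cdot h)h$ is cubic, the growth bound \eqref{locest}, namely $|\eta(V)|+|q(V)|\le C(1+|V|^3)$, holds. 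The interior regularity $U\in\underline{B}^{1/3}_{3,VMO}(\Omega_1\times(0,T))$ together with $p\in L^3_{\loc}$ is assumed, and the boundary hypothesis is precisely \eqref{boundaryass} with the stated flux $q$.

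Next I would check the compatibility relations \eqref{compat} with $B=(v,h,\,p-\tfrac12|v|^2)$. The time-component identity $D_U\eta=B\,D_UA$ is immediate: since $A$ is $U$-linear with $A_3=0$, one reads off $\partial_v\eta=v=B_1$, $\partial_h\eta=h=B_2$, and $\partial_p\eta=0$ (as $A$ is independent of $p$). The flux identities $D_Uq_j=B\,D_UF_j$ are the classical MHD entropy--entropy-flux relations and follow by a direct computation; here the quadratic correction $-\tfrac12|v|^2$ in $B_3$ is exactly what reconciles the pressure term $(p+\tfrac12|h|^2)\Ibb$ and the constraint row with the cubic flux $q$.

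The main obstacle is that $B$ fails the linear-growth hypothesis $|B(V)|\le C(1+|V|)$ of Theorem~\ref{globalthm}, because its last component $B_3=p-\tfrac12|v|^2$ grows quadratically in $v$. I would resolve this in the same way as for incompressible Euler. The component $B_3$ multiplies the row of $G=(F,A)$ associated with the constraint $\Div_x v=0$, namely $(v,0)$, which is \emph{linear} in $U$. By Lemma~\ref{commestimate} and Remark~\ref{r42} the commutator $[G(U)]_\epsilon-G([U]_\epsilon)$ produces no error on an affine row, so that row contributes nothing problematic in the mollification argument underlying Theorem~\ref{localthm}. The remaining components $B_1=v$ and $B_2=h$ are linear with bounded derivatives, so the corollary estimate \eqref{basic1} applies to them; moreover $B$ is independent of $p$, and $p$ enters $G$ only through an affine row, whence, again by Remark~\ref{r42}, the assumption $p\in L^3_{\loc}$ suffices. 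Consequently the interior companion law \eqref{timecompanion} holds in the sense of distributions.

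Finally I would carry out the local-to-global passage from the proof of Theorem~\ref{globalthm}: test \eqref{timecompanion} against $\psi(x,t)=\chi(t)\phi^\epsilon(x)$, let $\phi^\epsilon\to1$ pointwise in the interior so that the time-derivative term converges to $\int_0^T\int_\Omega\chi'(t)\,\eta(U)\,dx\,dt$, and control the boundary-layer term $\tfrac1\epsilon\int_{\epsilon/4\le d(x,\partial\Omega)\le\epsilon/2}|q(U)\cdot n(\sigma(x))|$ by the stated boundary hypothesis. This yields $\frac{d}{dt}\int_\Omega \tfrac12(|v|^2+|h|^2)\,dx=0$ in the sense of distributions. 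All steps except the growth-of-$B$ workaround are routine verifications; that workaround, resting on the linearity of the divergence-constraint row, is the essential point.
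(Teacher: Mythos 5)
Your proposal follows exactly the route the paper takes: identify $A$, $F$, $\eta$, $q$ and $B=(v,h,p-\tfrac12|v|^2)$ for the system \eqref{eq:MHD}, invoke Theorem~\ref{globalthm}, and circumvent the failure of linear growth of the last component of $B$ by observing (as in Remark~\ref{r42} and the incompressible Euler discussion) that this component multiplies only the affine divergence-constraint row of $G$, so that $p\in L^3_{\mathrm{loc}}$ suffices and no commutator error is produced there. The paper's own ``proof'' of this theorem is precisely this reduction, stated more tersely, so your attempt is correct and essentially identical in approach.
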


 In fact, the integrability requirement on $p$ can be relaxed owing to the elliptic arguments of~\cite{BTW18}, cf.\ the remark at the end of subsection~\ref{incompEul}.

An extension of the error estimates from~\cite{CET} was proposed by Caflisch et al.\ \cite{Cafetal} to handle
the global energy conservation for incompressible magnetohydrodynamics in the case without a boundary, i.e., $\Omega=\T^d$. 
We recall this result below:

\begin{theorem}[Th. 4.1 from~ \cite{Cafetal}] \label{caflisch}
Let $d = 2,3$ and let $(v,h)$ be a weak solution of ~\eqref{eq:MHD}. Suppose that
\[v\in C([0,T],B_{3,\infty}^{\alpha}(\Omega)),\;\; h\in C([0,T],B_{3,\infty}^{\beta}(\Omega))
\]
with
\[\alpha > \frac{1}{3},\;\;\alpha+2\beta > 1.
\]
Then the following energy identity holds for any $t\in[0,T]$:
\begin{equation}\label{energyMHD}
\int_{\Omega}|v(x,t)|^2 + |h(x,t)|^2\ \dx = \int_{\Omega}|v(x,0)|^2 + |h(x,0)|^2\ \dx.
\end{equation}
\end{theorem}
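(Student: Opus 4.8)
The natural strategy is the mollification (commutator) method of Constantin--E--Titi, adapted to the two coupled balances of~\eqref{eq:MHD} and exploiting that on the torus there is no boundary, so that every perfect spatial divergence integrates to zero. Since the hypotheses are stated in $C([0,T];B^\alpha_{3,\infty}(\Omega))$ and $C([0,T];B^\beta_{3,\infty}(\Omega))$, I would mollify \emph{only} in the space variable: set $v^\eps=v\ast_x\eta_\eps$, $h^\eps=h\ast_x\eta_\eps$, $p^\eps=p\ast_x\eta_\eps$, which preserves the constraints $\Div_x v^\eps=\Div_x h^\eps=0$. The guiding heuristic is that the two scaling conditions in the statement correspond exactly to the two types of cubic flux terms arising in the energy balance: the purely kinetic term (three factors of $v$) forces $3\alpha>1$, while every magnetic contribution carries two factors of $h$ tested against one derivative of $v$ or of $h$, forcing $\alpha+2\beta>1$.

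Concretely, I would mollify the momentum and induction equations, multiply the first by $v^\eps$ and the second by $h^\eps$, and integrate over $\T^d$. Writing the commutator $r_\eps(f,g):=(fg)^\eps-f^\eps g^\eps$, each mollified quadratic flux splits as $(fg)^\eps=f^\eps g^\eps+r_\eps(f,g)$. The products $f^\eps g^\eps$ produce only \emph{leading-order} terms, and the claim is that, after integration by parts, each of these is a perfect spatial divergence: the kinetic term gives $\Div_x\!\big(v^\eps\tfrac{|v^\eps|^2}{2}\big)$, the pressure term gives $\Div_x\!\big(v^\eps(p+\tfrac12|h|^2)^\eps\big)$ (using $\Div_x v^\eps=0$, so that only $(p+\tfrac12|h|^2)^\eps\in L^1$, recovered from the elliptic equation for the pressure, is needed), the term advecting $\tfrac12|h^\eps|^2$ gives $\Div_x\!\big(v^\eps\tfrac{|h^\eps|^2}{2}\big)$, and the two remaining mixed contributions combine, using $\Div_x h^\eps=0$, into $\Div_x\!\big((v^\eps\cdot h^\eps)h^\eps\big)$. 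Over the torus all of these integrate to zero, so (writing $\nabla f^\eps:r_\eps(g,w)$ for $\partial_j f^\eps_i\,r_\eps(g_i,w_j)$) the mollified energy balance reduces to
\begin{equation*}
\frac{d}{dt}\int_{\T^d}\frac{|v^\eps|^2+|h^\eps|^2}{2}\,dx
= \int_{\T^d}\nabla v^\eps : r_\eps(v,v)\,dx
-\int_{\T^d}\nabla v^\eps : r_\eps(h,h)\,dx
+\int_{\T^d}\nabla h^\eps : \big(r_\eps(h,v)-r_\eps(v,h)\big)\,dx .
\end{equation*}

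It then remains to show the right-hand side tends to zero. For this I would use the two standard estimates valid for $f\in B^s_{3,\infty}(\T^d)$, namely $\|\nabla f^\eps\|_{L^3}\lesssim\eps^{s-1}[f]_{B^s_{3,\infty}}$ and, via H\"older $L^3\times L^3\to L^{3/2}$ together with the commutator identity, $\|r_\eps(f,g)\|_{L^{3/2}}\lesssim\eps^{s+\sigma}[f]_{B^s_{3,\infty}}[g]_{B^\sigma_{3,\infty}}$. Applying these with $(s,\sigma)=(\alpha,\alpha)$ to the kinetic term yields a bound $\lesssim\eps^{3\alpha-1}$, which vanishes precisely when $\alpha>\tfrac13$; applying them with $(s,\sigma)=(\alpha,\beta)$ or $(\beta,\alpha)$ to the three magnetic terms yields a bound $\lesssim\eps^{\alpha+2\beta-1}$, which vanishes precisely when $\alpha+2\beta>1$. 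Since the spatial Besov seminorms are bounded uniformly in $t\in[0,T]$ by the continuity hypothesis, integrating against any $\chi\in C^1_c((0,T))$ and letting $\eps\to0$ gives $\tfrac{d}{dt}\int_{\T^d}\tfrac12(|v|^2+|h|^2)\,dx=0$ in $\mathcal{D}'((0,T))$; continuity of $t\mapsto\int_{\T^d}\tfrac12(|v|^2+|h|^2)\,dx$ (from $v,h\in C([0,T];L^2)$, using $B^s_{3,\infty}(\T^d)\hookrightarrow L^3\hookrightarrow L^2$ on the bounded torus) upgrades this to the pointwise identity~\eqref{energyMHD}.

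The main obstacle is the algebraic bookkeeping in the second step: one must verify that \emph{all} leading-order flux terms assemble into spatial divergences, and in particular that the two mixed terms coming from the momentum flux ($h\otimes h$) and the induction flux ($v\otimes h$, $h\otimes v$) cancel up to $\Div_x\!\big((v^\eps\cdot h^\eps)h^\eps\big)$. This is the analogue of the pressure/advection cancellation in the scalar Euler case, but it now couples the two equations and relies decisively on $\Div_x h=0$; it is also the point where the precise form of the induction flux matters, since the cancellation fails for the opposite sign convention. Once this cancellation is secured, the only genuinely analytic input is the commutator estimate, whose scaling transparently reproduces the two hypotheses $\alpha>\tfrac13$ and $\alpha+2\beta>1$.
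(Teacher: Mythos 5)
The paper states this theorem without proof, quoting it verbatim from Caflisch--Klapper--Steele \cite{Cafetal}, so there is no in-paper argument to compare against; your proposal is essentially the proof given in that reference, namely the Constantin--E--Titi spatial mollification and commutator scheme adapted to the coupled momentum/induction system on the torus. Your bookkeeping is correct and complete in the relevant respects: the leading-order terms do assemble into exact spatial divergences (with the mixed magnetic terms combining into $\Div_x\bigl((v^\varepsilon\cdot h^\varepsilon)h^\varepsilon\bigr)$ thanks to $\Div_x h^\varepsilon=0$, a point that indeed hinges on the divergence convention for the induction flux), the pressure is handled through $\Div_x v^\varepsilon=0$ plus the elliptic equation, and the commutator scalings $\varepsilon^{3\alpha-1}$ and $\varepsilon^{\alpha+2\beta-1}$ reproduce exactly the two hypotheses $\alpha>\tfrac13$ and $\alpha+2\beta>1$.
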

The same system was studied by Kang and Lee \cite{KangLee}, who
formulated the result in the spirit of the framework of
 Cheskidov et al.~\cite{CCFS08}, with $\Omega=\R^3$:
\begin{theorem}[Th. 6 from~\cite{KangLee}]
let $(v,h)$ be a weak solution of ~\eqref{eq:MHD}. Suppose that
\[v\in L^3([0,T],B^{\alpha}_{3,c_0}(\Omega)),\;\; h\in L^3([0,T],B^{\beta}_{3,c_0}(\Omega))
\]
with
\[\alpha \geq \frac{1}{3},\;\;\alpha+2\beta \geq 1.
\]
Then ~\eqref{energyMHD} holds.
\end{theorem}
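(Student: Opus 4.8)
The plan is to realize the magnetohydrodynamics system \eqref{eq:MHD} as a special instance of the abstract evolutionary conservation law treated in Theorem~\ref{globalthm}, and then to verify that each hypothesis of that theorem is met for the data at hand. First I would fix the identifications already recorded above: with $U=(v,h,p)$ one sets $A(U)=(v,h,0)$ and $F(v,h)=\big(v\otimes v-h\otimes h+(p+\tfrac12|h|^2)\Ibb,\ v\otimes h-h\otimes v,\ v\big)$, so that the weak formulation \eqref{timeweak} encodes precisely the momentum equation, the induction equation, and the incompressibility constraint $\Div_x v=0$ (the constraint $\Div_x h=0$ being propagated from the initial data, as noted after \eqref{eq:MHD}). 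The candidate entropy--entropy-flux pair is $\eta(U)=\tfrac12(|v|^2+|h|^2)$ and $q(U)=\tfrac12(|v|^2+|h|^2)v-(v\cdot h)h$, with multiplier $B(U)=(v,h,\ p-\tfrac12|v|^2)$.

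The core algebraic step is to check the compatibility relations \eqref{compat}, namely $D_U\eta=B\,D_UA$ and $D_Uq_j=B\,D_UF_j$ for each $j$. This is a direct, if index-heavy, chain-rule verification: differentiating $\eta$ produces $(v,h,0)$, which pairs correctly with the rows of $A$, while differentiating each flux component $q_j$ and contracting $B$ against the corresponding column of $F$ reproduces the ideal-MHD entropy flux. I expect no real difficulty here beyond the bookkeeping of tensor indices; this computation is also what \emph{forces} the precise form of $B$, including the entry $p-\tfrac12|v|^2$.

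The genuine obstacle, exactly as in the incompressible Euler discussion of Subsection~\ref{incompEul}, is that $B$ fails to have linear growth: its last entry $p-\tfrac12|v|^2$ is quadratic, whereas Theorem~\ref{globalthm} demands $|B(V)|\le C(1+|V|)$, so it cannot be invoked verbatim. I would dispose of this by the same structural observation: the offending entry of $B$ is contracted only with the \emph{affine} row of $G=(F,A)$, namely the incompressibility row, whose $F$-part is $v$ and whose $A$-part vanishes. Since the commutator estimate \eqref{basic2} carries a factor of $D_U^2G$, it yields \emph{no} error on a row where $G$ is affine (this is precisely Remark~\ref{r42}), so the nonlinear growth of the last component of $B$ is harmless: it multiplies a term on which mollification commutes exactly with the nonlinearity. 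Concretely I would split $G$ into its genuinely nonlinear part (quadratic in $(v,h)$ and independent of $p$) plus the $p$-linear block, rerun the local argument of Theorem~\ref{localthm} on each, and note that only the first part feeds the commutator, for which the first two (linearly growing) components of $B$ suffice.

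With that structural point secured, the remaining hypotheses are immediate. The target set $\mathcal{O}$ may be taken convex; $A$ and $F$ are polynomial, hence smooth with bounded (indeed constant) second derivatives coming from the quadratic terms $v\otimes v$, $h\otimes h$, $v\otimes h$, $h\otimes v$ and $\tfrac12|h|^2\Ibb$; and $\eta,q$ satisfy the cubic growth bound \eqref{locest}, since $\eta$ is quadratic and $q$ cubic in $(v,h)$. Remark~\ref{r42} additionally permits relaxing the regularity demand on $p$ to mere $L^3_{\loc}$ integrability, which is exactly what the hypothesis supplies. Finally, the boundary assumption \eqref{boundaryass} of Theorem~\ref{globalthm} coincides verbatim with the limit assumed in the statement, upon noting that $q(U)\cdot n=\big(\tfrac12(|v|^2+|h|^2)v-(v\cdot h)h\big)\cdot n$. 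Applying Theorem~\ref{globalthm} then gives $\frac{d}{dt}\int_\Omega\tfrac12(|v|^2+|h|^2)\,dx=0$ in $\mathcal{D}'(0,T)$, as claimed.
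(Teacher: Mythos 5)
This statement is quoted verbatim from Kang and Lee and is not proven in the paper; it is recalled precisely because it exhibits a feature that the paper's general machinery does \emph{not} reproduce, and your reduction to Theorem~\ref{globalthm} misses exactly that feature. The hypothesis allows $\alpha\geq\frac13$ together with $\alpha+2\beta\geq1$, so $\beta$ may be strictly smaller than $\frac13$ (for instance $\alpha=\frac12$, $\beta=\frac14$). Theorem~\ref{globalthm} requires the \emph{whole} vector $U=(v,h,p)$ to lie in $\underline{B}^{1/3}_{3,VMO}$, and Remark~\ref{r42} only lets you drop the regularity requirement on components that enter affinely (such as $p$); it gives no mechanism for trading regularity between $v$ and $h$, both of which enter the flux quadratically. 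The anisotropic condition $\alpha+2\beta\geq1$ comes from the trilinear structure of the MHD energy flux: the commutator terms generated by $h\otimes h:\nabla v$ and $v\otimes h:\nabla h$ involve two increments of $h$ and one of $v$, so a direct Constantin--E--Titi/Cheskidov-type estimate on each term separately is needed, as in the compressible and inhomogeneous Euler results (Theorems~\ref{inhomonsager} and~\ref{compressibleonsager}) which the paper likewise recalls from \cite{FGSW} rather than derives from the general framework. Your argument therefore proves only the special case $\alpha\geq\frac13$ \emph{and} $\beta\geq\frac13$, not the stated theorem.

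A second, smaller discrepancy: the Kang--Lee result is set on $\Omega=\R^3$ and its hypotheses contain no boundary-flux condition, so your claim that assumption~\eqref{boundaryass} ``coincides verbatim with the limit assumed in the statement'' refers to a hypothesis that is not present. In the whole space the passage from the local energy balance to the global identity~\eqref{energyMHD} rests on integrability and decay of $q(U)$ at infinity (or on the $L^3$-in-time, $L^2$-in-space control of $v$ and $h$), not on a vanishing normal flux across a physical boundary. The algebraic part of your proposal --- the identification of $A$, $F$, $\eta$, $q$, $B$, the verification of~\eqref{compat}, and the observation that the quadratic entry $p-\tfrac12|v|^2$ of $B$ only multiplies the affine incompressibility row --- is correct and matches the paper's discussion of incompressible MHD, but it yields the paper's own theorem in that subsection, not the sharper exponent range of the statement you were asked to prove.
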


%{\color{blue} In some cases theorems are quoted in $\R^3$ in other cases in $\Tbb^3$ and finally in $\Omega$ is there any rationality for this presentation except the chronological appearance of these results. If the presentation is kept as it is this should be said}
%
%{\color{blue}{A trivial remark for the reader since with $h=0$ the system is the incompressible Euler for which some wild solutions do existe in $C^{0,\alpha}$ for $\alpha <\frac13$ the condition $\alpha\ge \frac13$ is compulsory. On the other hand the condition $2\beta \geq 1-\alpha$ leads to a $\beta$ infinitely small for $\alpha$ going to $1$.This appears explicitly in the evolution equation for $h$ }} 

\subsection{Compressible magnetohydrodynamics}
We consider the system
\begin{align}
    \label{eq:MHD_diver}
    \begin{aligned}
    \rho_t+\diverg_{x}(\rho v) &= 0,\\
 \partial_t(\rho v) + \diverg_x\left(\rho v\otimes v + p(\rho)\Ibb +\frac{1}{2}|h|^2\Ibb-h\otimes h\right)&= 0,\\
    \partial_t h + \diverg_x(h \otimes v-v\otimes h)  &= 0,\\
        \diverg_{x}h &= 0,
        \end{aligned}
\end{align}
  where $v \colon\Omega\times[0,T]\to\R^n$ is the velocity field, $\rho \colon\Omega\times[0,T]\to\R$ the density of the fluid and $h\colon\Omega\times[0,T]\to\R^3$ is the magnetic field.
With ${ B}(\rho,v,h)=(P'(\rho)-1\slash 2 |v|^2,v,h,-h\cdot v)$, the conservation of the total energy reads:
\begin{align}
    \label{eq:MHD_en}
    &\partial_t\left(
            \frac{1}{2}\rho|v|^2 +  P(\rho)+ \frac{1}{2}|h|^2
    \right)+\diverg_x\left[
    \left(\frac{1}{2} \rho|v|^2+P(\rho)+p(\rho)+|h|^2\right) v -(v\cdot h)h
           \right]=0.
\end{align}
Assuming again $v\cdot n=0$ and $h\cdot n=0$  on the boundary provides that $q(U)\cdot n=0$ on the boundary.
%To provide that Theorem~\ref{globalthm} holds, we assume that
%\begin{equation}
%(\rho,v,h)\in \underline{B}_{3,\textit{VMO}}^{1/3}(\Omega\times[0,T])\times \underline{B}_{3,\textit{VMO}}^{1/3}(\Omega\times[0,T])\times\underline{B}_{3,\textit{VMO}}^{1/3}(\Omega\times[0,T]).\label{cmh1}
%\end{equation}

In the case of compressible magnetohydrodynamics, we can act in a similar fashion as in the case of the compressible Euler system and formulate the equations in different variables $\rho,m,h$, where again $m$ is the momentum, i.e., $m=\rho v$:
\begin{align}
   \label{eq:MHD_diver-m}
   \begin{aligned}
    \rho_t+\diverg_{x}m &= 0,\\
 \partial_t m + \diverg_x\left(\frac{m\otimes m}{\rho} + p(\rho)\Ibb +\frac{1}{2}|h|^2\Ibb-h\otimes h\right)&= 0,\\
    \partial_t h + \diverg_x\left(\frac{h \otimes m}{\rho}-\frac{m\otimes h}{\rho}\right)  &= 0,\\
        \diverg_{x}h &= 0.
        \end{aligned}
\end{align}

%Then if only we assume we are away from vacuum, i.e.,  $\rho\in [\underline{\rho},\bar{\rho}]$ for some $0<\underline{\rho}<\bar{\rho}<\infty$, then the assumptions can be relaxed. For Theorem~\ref{globalthm} to hold we need to assume that
%\begin{equation}
%(\rho,m,h)\in L^3(0,T;\underline{B}_{3,\textit{VMO}}^{1/3}(\Omega))\times L^3(0,T;\underline{B}_{3,\textit{VMO}}^{1/3}(\Omega))\times L^3(0,T;\underline{B}_{3,\textit{VMO}}^{1/3}(\Omega)).\label{cmh2}
%\end{equation}

Similarly,  if $\rho>\underline\rho>0$, we can state the result in a bounded domain $\Omega$.
 \begin{theorem}
 Let $(\rho,m,h)\in L^3(0,T;\underline{B}_{3,\textit{VMO}}^{1/3}(\Omega))\times L^3(0,T;\underline{B}_{3,\textit{VMO}}^{1/3}(\Omega))\times L^3(0,T;\underline{B}_{3,\textit{VMO}}^{1/3}(\Omega))$ be a solution to~\eqref{eq:MHD_diver-m}.
 Moroever, let 
 $$
\lim_{\varepsilon \rightarrow 0} \int_0^T\frac{1}{\epsilon}\int_{\frac{\epsilon}{4} \le d(x,\del\Omega) \le \frac{\epsilon}{2}}\left|\left(\left(
            \frac{|m|^2}{2\rho}
            + P(\rho)+p(\rho)+|h|^2
        \right)
    \frac{m}{\rho} -\left(\frac{m}{\rho}\cdot h\right) h
        \right)
    \cdot  n(\sigma( x))\right|\, dxdt =0.
 $$
 Then the energy is globally conserved, i.e., 
\begin{equation}
\frac{d}{dt}\int_\Omega  \left(
        \frac{|m|^2}{2\rho}  + P(\rho)+\frac 12|h|^2
              \right)dx=0
\end{equation}
in the sense of distributions.
 \end{theorem}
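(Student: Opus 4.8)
The plan is to obtain this statement as a direct specialization of the global result Theorem~\ref{globalthm}, in exactly the same manner as was carried out for the compressible Euler system \eqref{eq:comp_eul_div_2}. First I would cast the system \eqref{eq:MHD_diver-m} into the conservation-law form \eqref{FA}: with $U=(\rho,m,h)$ I set $A(U)=(\rho,m,h,0)$ and
\[
F(U)=\left(m,\ \frac{m\otimes m}{\rho}+p(\rho)\Ibb+\tfrac12|h|^2\Ibb-h\otimes h,\ \frac{h\otimes m}{\rho}-\frac{m\otimes h}{\rho},\ h\right),
\]
where the last (vanishing) component of $A$ together with the component $h$ of $F$ encodes the constraint $\diverg_x h=0$. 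The entropy and flux are $\eta(U)=\frac{|m|^2}{2\rho}+P(\rho)+\frac12|h|^2$ and $q(U)=\big(\frac{|m|^2}{2\rho}+P(\rho)+p(\rho)+|h|^2\big)\frac{m}{\rho}-\big(\frac{m}{\rho}\cdot h\big)h$, and the multiplier is $B(U)=\big(P'(\rho)+\frac{|m|^2}{2\rho^2},\ \frac{m}{\rho},\ h,\ -\frac{m}{\rho}\cdot h\big)$, its last component acting on the constraint row. Since $\rho\ge\underline\rho>0$ and (assuming $p$, hence $P$, is $C^2$ on the range of $\rho$) the set $\mathcal O=[\underline\rho,\infty)\times\R^n\times\R^n$ is convex and $A,F\in C^2(\overline{\mathcal O})$.

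Next I would verify the compatibility relations \eqref{compat}, that is $D_U\eta=B\,D_UA$ and $D_Uq_j=B\,D_UF_j$. These are precisely the algebraic identities that produce, by the chain rule, the formal energy balance \eqref{eq:MHD_en}; checking them is a direct computation rather than a genuine difficulty, to be done row by row, including the constraint row, where the component $-\frac{m}{\rho}\cdot h$ of $B$ is exactly what absorbs the contribution of $\diverg_x h$.

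The hard part will be that $B$ does not have linear growth: its first component $P'(\rho)+\frac{|m|^2}{2\rho^2}$ grows quadratically in $m$, so the hypotheses of Theorem~\ref{globalthm} are not literally satisfied. The key observation, as in the compressible Euler case and as formalized in Remark~\ref{r42}, is that the continuity row of $G$ is \emph{affine} ($A_1=\rho$, $F_1=m$); hence the second derivative of that row vanishes and the commutator estimate of Lemma~\ref{commestimate} produces no error term there, so the quadratic growth of this single component of $B$ is harmless. The remaining components $\frac{m}{\rho}$, $h$, and $-\frac{m}{\rho}\cdot h$ do have linear growth (using $\rho\ge\underline\rho$), while $\eta$ and $q$ obey the cubic bound \eqref{locest}. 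Thus the interior argument of Theorem~\ref{localthm} applies and the companion law \eqref{timecompanion} holds in $\mathcal D'(\Omega\times(0,T))$.

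Finally I would pass from the local companion law to the global energy identity through the boundary-layer argument in the proof of Theorem~\ref{globalthm}: testing with $\phi^\epsilon(x)\chi(t)$ and letting $\epsilon\to0$, the interior contribution converges to $\frac{d}{dt}\int_\Omega\eta(U)\,dx$, while the boundary contribution is controlled by precisely the hypothesis imposed here, namely assumption \eqref{boundaryass} applied to the magnetohydrodynamic flux $q(U)=\big(\frac{|m|^2}{2\rho}+P(\rho)+p(\rho)+|h|^2\big)\frac{m}{\rho}-\big(\frac{m}{\rho}\cdot h\big)h$. This yields the claimed global conservation of the total energy $\frac{d}{dt}\int_\Omega\big(\frac{|m|^2}{2\rho}+P(\rho)+\frac12|h|^2\big)\,dx=0$.
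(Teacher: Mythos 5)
Your proposal follows exactly the route the paper intends for this theorem: recast \eqref{eq:MHD_diver-m} in the form \eqref{FA}, identify $(A,F,\eta,q,B)$ with the constraint row $\diverg_x h=0$ absorbed by the last component of $B$, handle the quadratic growth of the first component of $B$ via the affine continuity row as in Remark~\ref{r42}, and conclude with the local statement of Theorem~\ref{localthm} followed by the boundary-layer argument of Theorem~\ref{globalthm} under the stated flux hypothesis. The only quibble is the sign of the first component of $B$, which should be $P'(\rho)-\frac{|m|^2}{2\rho^2}$ for $D_U\eta=B\,D_UA$ to hold, but this mirrors the convention already used in the paper's compressible Euler discussion and has no bearing on the validity of the argument.
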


%{\color{blue}{This is not clear for me because the difference between (\ref{cmh1}) and (\ref{cmh2}) is not obvious}} - {\color{magenta} I decided to omit this formulation in $(\rho,v,h)$ as then one would in fact need some assumptions on boundedness - see the discussion in compressible Euler section}}

%\subsection{Complete  Euler system}
%We consider the system
%\begin{align}
% \label{eq:comp_eul_complete}
%    \begin{aligned}
%        \partial_t \rho+ \diverg_{x}(\rho v) &= 0, \\
%        \partial_t (\rho v) + \diverg_x(\rho v\otimes v +p(\rho, \theta)\Ibb) &= 0,\\
%        \partial_t\left(\frac{1}{2}\rho |v|^2+\rho e(\rho,\theta)\right)+\diverg_x\left[\left(
%        \frac{1}{2}\rho|v|^2+\rho e(\rho, \theta)+p(\rho,\theta)\right)v\right]&=0,
%    \end{aligned}
%    %\quad \mbox{in }  \Omega\times[0,T]
%\end{align}
%for an unknown vector field $v\colon \Omega\times[0,T] \to \R^n$
%and scalar  $\rho\colon  \Omega\times[0,T]  \to \R$ and $\theta\colon  \Omega\times[0,T]  \to \R$. The function $p\colon [0,\infty)\to\R$ is a given pressure and $e\colon [0,\infty)\to\R$ is the specific internal energy. \note[to be extended????]

 \section*{Acknowledgements}
The authors would like to thank the   Wolfgang Pauli Institute, Vienna, for the warm and kind hospitality  where the authors completed this work. The work of E.S.T.\ was supported in part by the Einstein Stiftung/Foundation - Berlin, through the Einstein Visiting Fellow Program, and by the John Simon Guggenheim Memorial Foundation. P.G. and A.\'S.-G. received support from the National Science Centre (Poland), 2015/18/MST1/00075.

%\bibliographystyle{abbrv}
%\bibliography{onsager}
\end{document}